\documentclass[11pt,a4paper,twoside]{amsart}

\usepackage[margin=1.4in]{geometry}
\usepackage{amsfonts, amsmath,amssymb}
\usepackage[lite]{amsrefs}
\usepackage{graphicx}
\usepackage{pstricks}
\usepackage{subcaption}
\usepackage{tikz}
\usepackage{hyperref}
\usetikzlibrary{matrix}

\newcommand{\aTop}[2]{\begin{array}{c}{#1}\\{#2}\end{array}}
\newcommand{\op}{\operatorname}
\newcommand{\R}{{\mathbb{R}}}
\newcommand{\Z}{{\mathbb{Z}}}
\newcommand{\N}{{\mathbb{N}}}
\newcommand{\Q}{{\mathbb{Q}}}
\newcommand{\C}{{\mathbb{C}}}
\newcommand{\F}{{\mathbb{F}}}
\newcommand{\ringO}{{\mathcal{O}}}
\newcommand{\Hy}{{\mathcal{H}}^3}
\newcommand{\imQuadRing}{{\mathcal{O}}_{\Q(\sqrt{-m}\thinspace )}}

\newcommand{\SLtwo}{{\rm SL}_2}

\newcommand{\arithGrp}{{\rm SL}_2({\mathbb{Z}}[\sqrt{-2}\thinspace][\frac{1}{2}])}
\newcommand{\BianchiGrp}{{\rm SL}_2({\mathbb{Z}}[\sqrt{-2}\thinspace])}
\newcommand{\congruenceGrp}{{\Gamma_0(\sqrt{-2})}}
\newcommand{\mat}{\begin{pmatrix}a & b\\c & d\end{pmatrix}}
\newcommand{\cohomol}{{\operatorname{H}}}
\newcommand{\ftwo}{{\mathbb F}_2}
\newcommand{\Qe}{{\bf Q}_8}
\newcommand{\Te}{{\bf Te}_{24}}

\newcommand{\dumbbellgraph}{
\begin{pspicture}(0,0.05)(1.0,0.4)
\pscircle(0.2,0.2){0.15}
\psdots(0.35,0.2)
\psline(0.35,0.2)(0.65,0.2)
\psdots(0.65,0.2)
\pscircle(0.8,0.2){0.15}
\end{pspicture} 
}

\newcommand{\decorateddumbbellgraph}{
\scalebox{0.5}{
\begin{pspicture}(-0.5,0.05)(11,4)
\rput(-0.2,2){\huge $\langle C \rangle$}
\pscircle(2,2){1.5}
\rput(2.3,2){\huge $\langle C, B \rangle$}
\psdots[dotsize=12pt](3.5,2)
\psline(3.5,2)(6.5,2)
\rput(5,2.5){\huge $\langle B \rangle$}
\psdots[dotsize=12pt](6.5,2)
\pscircle(8,2){1.5}
\rput(7.7,2){\huge $\langle B, A \rangle$}
\rput(10.2,2){\huge $\langle A \rangle$}
\end{pspicture} 
}
}

\newcommand{\decoratedrhograph}{
\scalebox{0.5}{
\begin{pspicture}(-0.5,0.05)(9,4)
\rput(-0.2,2){\huge $\langle c \rangle$}
\pscircle(2,2){1.5}
\rput(2.4,2){\huge $\langle A, c \rangle$}
\psdots[dotsize=12pt](3.5,2)
\psline(3.5,2)(6.5,2)
\rput(5,2.5){\huge $\langle A \rangle$}
\psdots[dotsize=12pt](6.5,2)
\rput(7.6,2){\huge $\langle A, b \rangle$}
\end{pspicture} 
}
}

\newtheorem{theorem}{Theorem}
\newtheorem{proposition}[theorem]{Proposition}

\newtheorem{lemma}[theorem]{Lemma}
\newtheorem{conjecture}[theorem]{\bfseries Conjecture}

\theoremstyle{definition}
\newtheorem{definition}[theorem]{Definition}

\begin{document}
\title[Verification of the Quillen conjecture over the Bianchi groups]{Verification of the Quillen conjecture \\in the rank 2 imaginary quadratic case}
\author{Bui Anh Tuan}
\email{batuan@hcmus.edu.vn}
\address{Faculty of Mathematics and Computer Science, University of Science, VNU-HCM, 227 Nguyen Van Cu Str., Dist. 5, Ho Chi Minh City, Vietnam.}
\author{Alexander D. Rahm}
\email{Alexander.Rahm@upf.pf}
\address{Laboratoire de math\'ematiques GAATI,
Universit\'e de la Polyn\'esie fran\c{c}aise,
BP 6570,
98702 Faaa,
French Polynesia}
\begin{abstract}
We confirm a conjecture of Quillen in the case of the mod $2$ cohomology of arithmetic groups
${\rm SL}_2(\imQuadRing[\frac{1}{2}])$, where $\imQuadRing$ is an imaginary quadratic ring of integers.
To make explicit the free module structure on the cohomology ring conjectured by Quillen, 
we compute the mod $2$ cohomology of $\arithGrp$ via the amalgamated decomposition of the latter group.
\end{abstract}
\date{\today}
\subjclass[2010]{ 11F75, Cohomology of arithmetic groups. }
\maketitle

\section*{Introduction}
The Quillen conjecture on the cohomology of arithmetic groups has spur\-red a great deal of mathematics (according to \cite{Knudson}). 
On the cohomology of a linear arithmetic group, Quillen did find a module structure over the Chern classes of the containing group ${\rm GL}_n(\C)$ (equivalently, of ${\rm SL}_n(\C)$). 
Quillen did then conjecture that this module is free~\cite{Quillen}. 
While the conjecture has been proven for large classes of groups of rank 2 matrices, as well as for some groups of rank 3 matrices, 
an obstruction against its validity has been found by Henn, Lannes and Schwartz \cite{HLS},
and this obstruction does occur at least for matrix ranks 14 and higher \cite{HL}. So the scope of the conjecture is not correct in Quillen's original statement, 
and the present paper contributes to efforts on determining the conjecture's correct range of validity. 
Considering the outcomes  of previous research on this question, what seems likely, is that counterexamples may occur already at low matrix rank.
In the present paper however, we confirm the Quillen conjecture over all of the Bianchi groups (the ${\rm SL}_2$ groups over imaginary quadratic rings of integers), 
at the prime number $2$:
\begin{theorem} \label{general theorem}
Let $G$ be a discrete subgroup of ${\rm SL}_2({\mathbb{C}})$, of finite virtual cohomological dimension, and let the $2$-elements-group
$C = \{-1, 1\}$, generated by minus the identity matrix, be contained in $G$.\\
Then $\cohomol^*(G; \thinspace \ftwo)$ is a free module over $\cohomol^*({\rm SL}_2({\mathbb{C}}); \thinspace \ftwo)$.
\end{theorem}
The case that we have in mind is that $G = {\rm SL}_2(\imQuadRing[\frac{1}{2}])$ for $\imQuadRing$ a ring of imaginary quadratic integers.
Theorem~\ref{general theorem}  is a consequence of a theorem of Broto and Henn, as we shall explain in Section~\ref{general proof}.
To make explicit the free module structure on the cohomology ring in one example, we compute the mod $2$ cohomology of $\arithGrp$ via the amalgamated decomposition
$$ \arithGrp \cong \BianchiGrp *_{\congruenceGrp} \BianchiGrp $$
which is known from Serre's classical book~\cite{trees}, and which yields a Mayer--Vietoris long exact sequence on group cohomology that we evaluate.

For ${\rm SL}_2({\mathbb{Z}}[\sqrt{-1}\thinspace][\frac{1}{2}])$, a computation of the mod $2$ cohomology ring structure has already been achieved by Weiss~\cite{Weiss},
but the uniformizing element in the amalgamated decomposition is different for the Gaussian integers ${\mathbb{Z}}[\sqrt{-1}\thinspace]$ from the one for the other imaginary quadratic rings.
And after exclusion of the Gaussian integers, there is a general description of the mod~$2$ cohomology rings of the Bianchi groups and their subgroups~\cites{BerkoveRahm, BLR}.
So for our purposes, the Gaussian integers do not provide a typical example, while the ring ${\mathbb{Z}}[\sqrt{-2}\thinspace]$ does.
Moreover, having confirmed the Quillen conjecture for ${\rm SL}_2$ over the Gaussian integers and over ${\mathbb{Z}}[\sqrt{-2}\thinspace]$ 
removes one of the obstacles against SL$_4(\Z[\frac{1}{2}])$ to satisfy the Quillen conjecture (see diagram (1) in \cite{Weiss}), 
because in the centralizer spectral sequence for SL$_4(\Z[\frac{1}{2}])$ (compare with \cite{Henn}),
the stabilizers  which have the highest complexity are of types $\arithGrp$, ${\rm SL}_2({\mathbb{Z}}[\sqrt{-1}\thinspace][\frac{1}{2}])$ and 
${\rm SL}_2({\mathbb{Z}}[\sqrt{2}\thinspace][\frac{1}{2}])$.

We follow Weiss's strategies for several aspects of our calculation, while we use a different cell complex and recent homological algebra techniques~\cites{BerkoveRahm, BLR}
to overcome specific technical difficulties entering with $\arithGrp$.
Then we arrive at the result stated in Theorem~\ref{cohomology ring} below.
To see how this illustrates the module structure predicted by the Quillen conjecture, let us state the latter over ${\rm SL}_n(\C)$).

\begin{conjecture}[Quillen] \label{Quillen-conjecture}
Let $\ell$ be a prime number. Let $K$ be a number field with
$\zeta_\ell\in K$, and $S$ a finite set of places containing the
infinite places and the places over $\ell$. Then the natural inclusion
$\mathcal{O}_{K,S}\hookrightarrow \mathbb{C}$ makes
$\op{H}^*(\op{SL}_n(\mathcal{O}_{K,S});\thinspace \mathbb{F}_\ell)$ a free
module over the cohomology ring
$\op{H}^*_{\op{cts}}(\op{SL}_n(\mathbb{C});\thinspace \mathbb{F}_\ell)$. 
\end{conjecture}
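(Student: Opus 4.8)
The plan is to attack the freeness assertion through the interplay between Quillen's detection theorem and the Auslander--Buchsbaum characterization of free modules over a polynomial ring. Write $P = \op{H}^*_{\op{cts}}(\op{SL}_n(\mathbb{C}); \mathbb{F}_\ell)$; after passage to the maximal compact $\op{SU}(n)$ this is the polynomial algebra on the Chern classes $c_2, \dots, c_n$, so $P$ is a graded polynomial ring of Krull dimension $n-1$. Since $\op{SL}_n(\mathcal{O}_{K,S})$ has finite virtual cohomological dimension and finitely generated mod-$\ell$ cohomology, one expects $M := \op{H}^*(\op{SL}_n(\mathcal{O}_{K,S}); \mathbb{F}_\ell)$ to be a finitely generated graded $P$-module, and for such a module over a graded polynomial ring freeness is equivalent to graded projectivity, hence --- by Auslander--Buchsbaum --- to the equality $\op{depth}_P(M) = \dim P$. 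The first task would therefore be to compute $\dim_P M$ via Quillen's $F$-isomorphism theorem, which identifies the Krull dimension of $M$ with the maximal rank of an elementary abelian $\ell$-subgroup of $\op{SL}_n(\mathcal{O}_{K,S})$, and to compare this rank with the dimension $n-1$ of the Chern subring.

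The second, and decisive, step is to control the depth. Here I would follow the localization philosophy of Broto--Henn and of Henn--Lannes--Schwartz: the failure of $M$ to be free over $P$ is measured by the contribution of the \emph{centralizers} of nontrivial elementary abelian $\ell$-subgroups $E \subset \op{SL}_n(\mathcal{O}_{K,S})$. Concretely, one would set up the centralizer (isotropy) spectral sequence --- as in the rank-$2$ computation underlying Theorem~\ref{general theorem}, and as used by Henn for $\op{SL}_4$ \cite{Henn} --- whose input is the collection of the $\op{H}^*(C(E); \mathbb{F}_\ell)$ together with their module structure over the restricted Chern classes. If every such centralizer cohomology were itself free over the restricted Chern subring, and the spectral sequence degenerated, the pieces would assemble into a global free resolution of length zero, which would deliver the conjecture. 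The technical heart would be to verify this local freeness and the degeneration, for instance by exhibiting an explicit regular sequence of Chern classes acting on each $\op{H}^*(C(E))$.

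The main obstacle is that this program cannot succeed in the stated generality, and the obstruction is intrinsic rather than technical: as recorded in the introduction, Henn, Lannes and Schwartz \cite{HLS} exhibit a cohomological obstruction to the freeness, and \cite{HL} shows it is realized for $n \geq 14$. In the language of the previous paragraph, the centralizers of maximal elementary abelian $\ell$-subgroups contribute cohomology classes that lower $\op{depth}_P(M)$ strictly below $\dim P$, so $M$ fails to be Cohen--Macaulay over $P$ and is therefore not free. Consequently no uniform proof of Conjecture~\ref{Quillen-conjecture} as worded can exist, and the realistic outcome of the strategy above is a \emph{conditional} theorem: freeness holds precisely when the elementary abelian $\ell$-subgroups have rank small enough that their centralizers carry no superfluous cohomology. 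This is exactly the regime exploited in the present paper, where $n = 2$ and $\ell = 2$ keep the $\ell$-rank at most two; verifying the analogous centralizer condition for the next cases ($n = 3$, or $\op{SL}_4$ via its rank-$2$ stabilizers) marks the natural boundary of the method and the point at which the obstruction of \cite{HLS} must be confronted head-on.
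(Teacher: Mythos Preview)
The statement you were given is Conjecture~\ref{Quillen-conjecture}, not a theorem, and the paper does not supply a proof of it --- indeed the introduction explicitly records that the conjecture is \emph{false} as stated, via the obstruction of Henn--Lannes--Schwartz realized for $n \geq 14$. There is therefore no ``paper's own proof'' to compare your proposal against; the paper only establishes the very special case $n=2$, $\ell=2$, $K$ imaginary quadratic (Theorem~\ref{general theorem}), and does so by the Broto--Henn depth argument combined with Lemma~\ref{commutative algebra}.

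Your write-up is not really a proof attempt but a correct diagnosis of why no proof can exist in the stated generality, and your depth/Auslander--Buchsbaum framing is a reasonable way to organize the obstruction. A couple of points nonetheless deserve tightening. First, your proposed strategy would require $\op{depth}_P M = \dim P = n-1$, but even in the cases where the conjecture holds one does not typically get the depth all the way up to $n-1$ by the Broto--Henn mechanism: that theorem only supplies depth at least the rank of a \emph{central} elementary abelian $\ell$-subgroup, which for $\op{SL}_n$ at $\ell=2$ is $1$ (the center $\{\pm I\}$). This suffices when the Chern subring is one-dimensional, i.e.\ exactly the $n=2$ case treated here, but already for $n=3$ your sketch would need a genuinely different source of regular elements. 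Second, the finite generation of $M$ over $P$ that you ``expect'' is itself a nontrivial input (it is Quillen--Venkov type finiteness, which does hold here via the Borel--Serre compactification giving finite vcd), and should be stated as a hypothesis rather than folded into the heuristic.
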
 

\begin{theorem} \label{cohomology ring}
Denote by $e_4$ the image of the second Chern class of the natural representation of ${\rm SL}_2(\C)$,
so  $\ftwo [e_4]$ is the image of $\cohomol^*_{\rm cts}({\rm SL}_2(\C); \ftwo)$.
Then the cohomology ring $\cohomol^*(\arithGrp; \thinspace \ftwo)$ is the free module of rank $10$ over  $\ftwo [e_4]$ with basis
$\{1, x_2,x_3,y_3,z_3,s_3,x_4,s_4,s_5,s_6\},$
where the subscript of the classes specifies their degree as a cohomology class.
\end{theorem}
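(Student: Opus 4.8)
The plan is to read off $\cohomol^*(\arithGrp;\thinspace\ftwo)$ from the amalgamated decomposition
\[
  \arithGrp \cong \BianchiGrp *_{\congruenceGrp} \BianchiGrp
\]
recalled above. By the general theory of groups acting on trees, this amalgam produces a Mayer--Vietoris long exact sequence
\[
  \cdots \to \cohomol^{n-1}(\congruenceGrp) \xrightarrow{\ \delta\ } \cohomol^{n}(\arithGrp) \to \cohomol^{n}(\BianchiGrp) \oplus \cohomol^{n}(\BianchiGrp) \xrightarrow{\ r\ } \cohomol^{n}(\congruenceGrp) \to \cdots
\]
with $\ftwo$-coefficients throughout, in which $r$ is the difference of the two restriction maps and $\delta$ is the connecting homomorphism. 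The cohomology of the amalgam is then assembled from three ingredients: the mod~$2$ cohomology of the vertex group $\BianchiGrp$, that of the edge group $\congruenceGrp$, and the two restriction homomorphisms between them. First I would fix the two embeddings $\congruenceGrp \hookrightarrow \BianchiGrp$: they differ by conjugation with the uniformizing element, and it is exactly this twist that separates the ring $\Z[\sqrt{-2}\thinspace]$ from the Gaussian case.

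For the vertex group I would invoke the explicit description of the mod~$2$ cohomology of the Bianchi groups and their subgroups from \cite{BerkoveRahm}: the action of $\BianchiGrp$ on hyperbolic space $\Hy$ carries an equivariant cell structure with finite cell stabilizers, and the associated equivariant spectral sequence computes $\cohomol^*(\BianchiGrp;\thinspace\ftwo)$ from the cohomology of those stabilizers, which are cyclic groups together with the quaternion group $\Qe$ and the binary tetrahedral group $\Te$. For the edge group $\congruenceGrp$ I would run the analogous computation on a cell complex adapted to this congruence subgroup --- the cell complex alluded to in the introduction --- again extracting $\cohomol^*(\congruenceGrp;\thinspace\ftwo)$ from the finite stabilizers and their incidences in a fundamental domain, with the homological-algebra techniques of \cite{BerkoveRahm} used to settle the differentials and extension problems.

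The crux is the middle map $r$. I would compute each restriction $\cohomol^*(\BianchiGrp;\thinspace\ftwo) \to \cohomol^*(\congruenceGrp;\thinspace\ftwo)$ by comparing fundamental domains and tracing how a finite stabilizer in the vertex group restricts to stabilizers in the edge group, remembering that the second restriction must first be composed with the automorphism of $\congruenceGrp$ induced by conjugation with the uniformizer. Once $r$ is known in each degree, its kernel feeds $\cohomol^*(\arithGrp)$ through the restriction pair, while its cokernel feeds the next degree through the connecting map $\delta$; this yields the additive structure degree by degree. I expect the determination of $r$, and the resulting bookkeeping of kernels and cokernels, to be the main obstacle, precisely because of the uniformizer twist that has no counterpart over the simpler coefficient rings.

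Finally I would upgrade this additive answer to the ring and $\ftwo[e_4]$-module structure. All maps in the Mayer--Vietoris sequence respect cup products, so classes surviving from the vertex groups keep their known products, while products of classes born under $\delta$ are pinned down by naturality, by low-degree constraints, and if necessary by Steenrod operations. The generator $e_4$ is the image of the second Chern class $c_2$ under $\arithGrp \hookrightarrow \SLtwo(\C)$; since $\cohomol^*_{\mathrm{cts}}(\SLtwo(\C);\thinspace\ftwo) = \ftwo[e_4]$ acts compatibly on every term of the sequence, I would verify that multiplication by $e_4$ is injective and that beyond a finite range the cohomology is exhausted by $e_4$-multiples of lower-degree classes. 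Freeness itself is guaranteed a priori by Theorem~\ref{general theorem}, so the remaining work is only to confirm that the classes listed in the statement furnish the free $\ftwo[e_4]$-basis and to read off the rank.
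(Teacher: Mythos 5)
Your plan coincides with the paper's actual proof in every essential step: the Serre amalgam feeds a Mayer--Vietoris sequence; the edge group $\congruenceGrp$ is handled via an adapted cell complex and the torsion-subcomplex techniques of \cite{BerkoveRahm} (Sections~\ref{The cell complex}--\ref{The cohomology of the congruence subgroup}); the two restrictions, one twisted by conjugation with the uniformizer, are traced on compatible equivariant spectral sequences to extract the kernel and cokernel of $(i^*,j^*)$ (Section~\ref{The maps on equivariant spectral sequences}); and freeness over $\ftwo[e_4]$ is imported a priori from Theorem~\ref{general theorem}, so the surviving classes are read off as the rank-$9$ basis (Section~\ref{the proof}). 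Your closing remark that Steenrod operations might be needed for products is harmless but unnecessary here, since the statement only concerns the $\ftwo[e_4]$-module structure, which the paper pins down exactly as you propose.
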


\subsection*{Organization of the paper}
In Section~\ref{The amalgamated decomposition}, we recall the amalgamated decomposition
$ \arithGrp \cong \BianchiGrp *_{\congruenceGrp} \BianchiGrp $ on which our calculation is based,
as well as its general form which we use in Section~\ref{general proof} in the proof of Theorem~\ref{general theorem}.
In Section~\ref{The cell complex}, we describe a cell complex for the involved congruence subgroup $\congruenceGrp$.
We use it to compute the cohomology of the latter in Section~\ref{The cohomology of the congruence subgroup}.
In Section~\ref{The maps on equivariant spectral sequences},
we determine the maps induced on cohmology by the two injections of $\congruenceGrp$ into $\BianchiGrp$
that characterize the amalgamated product.
Finally in Section~\ref{the proof}, we conclude the proof of Theorem~\ref{cohomology ring}.

\subsection*{Acknowledgements}
This research was funded by Vietnam National University Ho Chi Minh City (VNU-HCM) under grant number C2018-18-02.
The authors are grateful to Oliver Br\"aunling for explaining the involved amalgamated decompositions; 
to Matthias Wendt for helpful discussions;
to Ga\"el Collinet for instigating the present study;
to Grant S. Lakeland for providing a fundamental domain for the relevant congruence subgroup;
and especially to Hans-Werner Henn and the anonymous referee for providing alternative proofs for Theorem~\ref{general theorem}
and for very useful advice on our cohomological calculations.
We also would like to thank the referee for various further constructive suggestions.
Bui Anh Tuan acknowledges the hospitality of University of Luxembourg (Gabor Wiese's grant AMFOR), which he did enjoy thrice for one-month research stays.

\section{The amalgamated decomposition} \label{The amalgamated decomposition}
For a quadratic ring $\ringO_K$, in this section we decompose ${\rm SL}_2(\ringO_K[\frac{1}{2}])$  as a product of two copies of 
${\rm SL}_2(\ringO_K)$ amalgamated over a suitable congruence subgroup.
It is described in Serre's book \textit{Trees} \cite{trees} how to do such a decomposition in general,
 and example decompositions are given for ${\rm SL}_2({\mathbb{Z}}[\frac{1}{p}])$ in Serre's book 
as well as for ${\rm SL}_2({\mathbb{Z}}[\sqrt{-1}\thinspace][\frac{1}{2}])$ in~\cite{Weiss}.

 The following theorem is implicit in the section 1.4 of chapter II of \cite{trees}.

 Let $K$ be a field equipped with a \emph{discrete valuation} $v$.
Recall that $v$ is a homomorphism from $(K \setminus \{0\}, \cdot)$ to $(\Q, +)$ and that
\begin{center}
 $v(x+y) \geq $ min($v(x),v(y)$) for $x,y \in K$,
\end{center}
with the convention that $v(0) = +\infty$.
\\
Let $\ringO$ be the subset of $K$ on which the valuation is non-negative.
Then $\ringO$ is a subring of $K$ and called the Discrete Valuation Ring.
A \emph{uniformizer} is an arbitrary element $\pi \in \ringO \setminus \{0\}$  with $v(\pi) > 0$ 
such that for all $x \in \ringO$ with $v(x)>0$, 
the inequality $v(\pi) \leq v(x)$ holds.
\begin{theorem}[Serre] \label{decomposition into amalgam}
Let $A$ be a dense sub-ring of $K$. Then for any uniformizer $\pi$, we have the decomposition
$$\SLtwo(A)  \cong \SLtwo(\ringO \cap A) \ast_{\Gamma_0(\pi)} \SLtwo(\ringO \cap A), $$
where $\Gamma_0(\pi)$ is the subgroup of $\SLtwo(\ringO \cap A)$ 
of upper triangular matrices modulo the ideal $(\pi)$ of $\ringO \cap A$, injecting
\begin{itemize}
 \item as the natural inclusion into the first factor of type $\SLtwo(\ringO \cap A)$,
 \item via the formula
$$ \mat \mapsto \begin{pmatrix}  \pi^{-1}d\pi & \pi^{-1}c  \\  b\pi & a \end{pmatrix}$$
 into the second factor of type $\SLtwo(\ringO \cap A)$.
\end{itemize}
\end{theorem}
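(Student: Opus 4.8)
The plan is to realise $\SLtwo(A)$ as a group acting on the Bruhat--Tits tree $X$ attached to the discrete valuation $v$, and to read off the amalgam from a fundamental domain by means of Bass--Serre theory. First I would recall the construction of $X$: its vertices are the homothety classes $[L]$ of $\ringO$-lattices $L \subset K^2$ (finitely generated $\ringO$-submodules spanning $K^2$), two classes being joined by an edge when representatives can be chosen with $\pi L' \subsetneq L \subsetneq L'$. Since $\ringO$ is a discrete valuation ring, hence a principal ideal domain, the elementary divisor theorem shows that $X$ is connected and has no circuits, so it is a tree, and $\GLtwo(K)$ acts on it through its action on lattices. The two basepoints I would fix are $v_0 = [\ringO e_1 \oplus \ringO e_2]$ and $v_1 = [\ringO e_1 \oplus \pi\ringO e_2]$, which are adjacent; writing $g = \begin{pmatrix} 0 & 1 \\ \pi & 0 \end{pmatrix}$ one checks $g\, v_0 = v_1$.

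Next I would identify the relevant stabilizers inside $\SLtwo(K)$. The stabilizer of $v_0$ is $\SLtwo(\ringO)$, the stabilizer of $v_1$ is $g\,\SLtwo(\ringO)\,g^{-1}$, and their intersection is the subgroup $\Gamma_0(\pi)$ of matrices in $\SLtwo(\ringO)$ that are upper triangular modulo $(\pi)$ (the condition $g^{-1}Mg \in \SLtwo(\ringO)$ forces $c \equiv 0 \pmod{\pi}$); intersecting these with $\SLtwo(A)$ turns them into $\SLtwo(\ringO \cap A)$, its conjugate, and $\Gamma_0(\pi)$ respectively, because a matrix of $\SLtwo(A)$ whose entries lie in $\ringO$ has entries in $\ringO \cap A$. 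Crucially, $\SLtwo$ preserves the two-colouring of $X$ given by the parity of the valuation of the lattice index, so the action is without inversion and no group element interchanges the two ends of an edge.

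The main step --- and the place where the hypothesis that $A$ be dense in $K$ is indispensable --- is to prove that the single edge $\{v_0, v_1\}$ is a strict fundamental domain for $\SLtwo(A)$ acting on $X$. Here I would first observe that $\SLtwo(A)$ is dense in $\SLtwo(K)$: over the field $K$ the group $\SLtwo(K)$ is generated by elementary transvections, each of which is approximated entrywise by an elementary transvection with entry in $A$, and matrix multiplication is continuous for the valuation topology. Since the vertex stabilizer $\SLtwo(\ringO)$ is open in $\SLtwo(K)$, every coset $h\,\SLtwo(\ringO)$ with $h \in \SLtwo(K)$ meets the dense subgroup $\SLtwo(A)$; this shows that $\SLtwo(A)$ has the same vertex orbits on $X$ as $\SLtwo(K)$, namely the two orbits of $v_0$ and $v_1$, together with a single orbit of geometric edges. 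I expect the bookkeeping needed to pass from transitivity of $\SLtwo(K)$ to transitivity of $\SLtwo(A)$, and to verify that the stabilizer subgroups are genuinely open, to be the only delicate part; everything else is formal.

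Finally I would invoke the structure theorem of Bass--Serre theory: a group acting on a tree without inversion whose quotient is a single edge is the free product of the two vertex stabilizers amalgamated over the edge stabilizer. Applied to $\SLtwo(A)$ this yields $\SLtwo(A) \cong \SLtwo(\ringO \cap A) \ast_{\Gamma_0(\pi)} \SLtwo(\ringO \cap A)$, where the first copy is the stabilizer of $v_0$ included naturally, and the second copy is the stabilizer of $v_1$ identified with $\SLtwo(\ringO \cap A)$ through conjugation by $g$. Transporting the inclusion $\Gamma_0(\pi) \hookrightarrow g\,\SLtwo(\ringO \cap A)\,g^{-1}$ back by $M \mapsto g^{-1}Mg$ gives, for $\mat \in \Gamma_0(\pi)$, the image $\begin{pmatrix} \pi^{-1}d\pi & \pi^{-1}c \\ b\pi & a \end{pmatrix}$, which is exactly the second injection in the statement; the condition $c \equiv 0 \pmod{\pi}$ guarantees that $\pi^{-1}c \in \ringO \cap A$, so the formula indeed lands in $\SLtwo(\ringO \cap A)$.
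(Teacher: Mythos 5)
Your proposal is correct and follows essentially the same route as the paper, which gives no independent proof but attributes the statement to Section~1.4 of Chapter~II of Serre's \emph{Trees}: the argument implicit there is precisely your combination of the lattice-class tree, the identification of the stabilizers of two adjacent vertices with $\SLtwo(\ringO)$ and $g\,\SLtwo(\ringO)\,g^{-1}$ (intersecting in $\Gamma_0(\pi)$), the density-plus-open-stabilizers argument transferring vertex- and edge-transitivity from $\SLtwo(K)$ to $\SLtwo(A)$, and the Bass--Serre structure theorem for an action without inversion admitting a segment as strict fundamental domain. Your conjugation computation $M \mapsto g^{-1}Mg$ also recovers exactly the matrix formula of the statement (in Weiss's form, with $\pi^{-1}d\pi = d$ in the commutative case), so nothing essential is missing.
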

Note that in the above formula, we have replaced Serre's original matrix by an obviously equivalent one suggested by Weiss~\cite{Weiss}.
This has been done with the purpose to have an alternative description of the injection in question as conjugation by the matrix
$ \begin{pmatrix}  0 & 1  \\  \pi & 0 \end{pmatrix}$.

We can now apply Serre's above theorem to quadratic number fields.
\begin{definition}
 Let $K$ be a quadratic number field, let $p$ be a prime number, 
 and let $v_p$ be the $p$-adic valuation of $\Q$.
 We define a function $v_p^{N}$ on $K$ by $$v_p^{N}(x) := v_p(N(x)),$$
where $N(x)$ is the number-theoretic norm of $x \in K$. 
\end{definition}
Note that as $K$ is quadratic, $N(x) = x\overline{x}$ with $\overline{x}$ the Galois conjugate of $x$,
and for $K$ imaginary quadratic, the Galois conjugate is the complex conjugate.

\begin{lemma}[recall of basic algebraic number theory] \label{valuation}
The function $v_p^{N}$
\begin{enumerate}
 \item[(i)] is a valuation of $K$,
 \item[(ii)] extends the valuation $2v_p$ \\
{\rm (the latter being the double of the $p$-adic valuation of $\Q$)},
 \item[(iii)] and takes its values exclusively in $\Z$.
\end{enumerate}
\end{lemma}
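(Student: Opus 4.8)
The plan is to treat the three assertions in increasing order of difficulty, disposing of (ii), (iii) and the multiplicativity of $v_p^{N}$ as immediate consequences of the definitions, and isolating the ultrametric inequality of (i) as the only point that requires genuine input from the arithmetic of $p$ in $K$.

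First I would record the formal facts. Since $N(xy)=N(x)N(y)$ and $v_p$ is a homomorphism, $v_p^{N}(xy)=v_p(N(x))+v_p(N(y))=v_p^{N}(x)+v_p^{N}(y)$, so $v_p^{N}$ is a homomorphism $(K\setminus\{0\},\cdot)\to(\Z,+)$. This already gives (iii): for nonzero $x\in K$ one has $N(x)\in\Q\setminus\{0\}$, on which the ordinary $p$-adic valuation is $\Z$-valued. For (ii), a rational $q$ is fixed by the Galois conjugation, so $N(q)=q\overline{q}=q^2$ and $v_p^{N}(q)=v_p(q^2)=2v_p(q)$, i.e.\ $v_p^{N}$ restricts to $2v_p$ on $\Q$.

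The heart of the matter is the ultrametric inequality $v_p^{N}(x+y)\ge\min(v_p^{N}(x),v_p^{N}(y))$. Using the homomorphism property, I would first reduce it, in the standard way, to the statement that $\{x\in K:v_p^{N}(x)\ge 0\}$ is closed under addition, equivalently that $v_p^{N}(t)\ge 0$ implies $v_p^{N}(1+t)\ge 0$: indeed, for $v_p^{N}(x)\le v_p^{N}(y)$ one writes $x+y=x(1+y/x)$ and applies the homomorphism property to $t=y/x$. Expanding the norm, $N(1+t)=(1+t)(1+\overline{t})=1+\op{Tr}(t)+N(t)$ with $\op{Tr}(t)=t+\overline{t}\in\Q$, so once $v_p(N(t))\ge 0$ is assumed, the desired $v_p(N(1+t))\ge 0$ comes down to $v_p(\op{Tr}(t))\ge 0$. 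Thus the entire content of (i) is the implication that $p$-integrality of $N(t)$ forces $p$-integrality of $\op{Tr}(t)$.

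I expect this norm-to-trace implication to be the main obstacle, since it is exactly where the splitting behaviour of $p$ in $K$ enters and cannot be removed by formal manipulation. The tool I would use is the standard theory of extensions of a discrete valuation to a finite extension: passing to the completion $K\otimes_{\Q}\Q_p$, the valuation $v_p$ extends to a discrete valuation $w$ given by the norm formula $w=\tfrac12\,v_p\circ N$ (the local norm restricting to $N_{K/\Q}$ on $K$), whence $v_p^{N}=2w$ is a positive multiple of a genuine valuation and is therefore itself a valuation. This argument needs $K\otimes_{\Q}\Q_p$ to be a field, that is, a single place of $K$ above $p$ (the inert or ramified case); this is precisely the situation of our application, where $p=2$ ramifies in $\Q(\sqrt{-2}\thinspace)$, whereas the norm-to-trace implication, and hence the valuation property, genuinely fails when $p$ splits.
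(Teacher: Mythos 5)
Your argument for (ii) and (iii) is correct and is the same formal computation any proof must make. For the ultrametric inequality in (i) you take a genuinely different route from the paper: the paper in fact prints no proof at all, delegating to an ``elementary proof'' in its first preprint version, which works directly with the quadratic norm form; you instead reduce to the implication $v_p(N(t))\geq 0 \Rightarrow v_p(\operatorname{Tr}(t))\geq 0$ and then subsume it into the standard uniqueness theorem for extensions of complete discrete valuations, via the norm formula $w=\tfrac12\, v_p\circ N$ on $K\otimes_{\Q}\Q_p$. What your route buys is brevity and a structural explanation of exactly when the lemma can hold, at the price of quoting local field theory; the elementary route stays self-contained (one can check the norm-to-trace step by hand: if $t=a+b\sqrt{-m}$ with $v_p(a)<0$, then $v_p(a^2+mb^2)\geq 0$ forces cancellation between $a^2$ and $mb^2$, which forces $-m$ to be a square modulo $p$, i.e.\ the split case), but obscures that mechanism.

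Your caveat about split primes is not a defect of your proof but a genuine sharpening: as literally stated (arbitrary quadratic $K$ and arbitrary prime $p$), the lemma is false. For $K=\Q(i)$ and $p=5$ one has $v_5^N(2+i)=v_5^N(2-i)=1$ yet $v_5^N\bigl((2+i)+(2-i)\bigr)=v_5(16)=0$, violating the ultrametric inequality; equivalently, your norm-to-trace implication fails for $t=(3+4i)/5$, where $N(t)=1$ but $\operatorname{Tr}(t)=6/5$. So the hypothesis you isolate --- that $K\otimes_{\Q}\Q_p$ be a field, i.e.\ $p$ inert or ramified --- is exactly the missing hypothesis under which the statement is true, and it is satisfied in the paper's application, where $p=2$ ramifies in $\Q(\sqrt{-2}\thinspace)$ (indeed $v_2^N(\sqrt{-2}\thinspace)=1$ is odd). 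Your proposal therefore fully covers everything the paper actually uses, and correctly identifies the boundary of validity that the printed statement glosses over.
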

An elementary proof can be found in the first preprint version of the present paper.

Now we can turn our attention to ${\rm SL}_2({\mathbb{Z}}[\sqrt{-2}][\frac{1}{2}])$.
 Choose $K := \Q(\sqrt{-2})$, 
 equipped with the valuation $v_2^{N}$ defined above.
Then we can expect $\ringO = \Z_{(2)}[\sqrt{-2}]$. 
Furthermore, we expect $A := \Z[\sqrt{-2}][\frac{1}{2}]$ 
to be dense in $\Q(\sqrt{-2})$ with respect to the topology induced by $v_2^{N}$;
and $\ringO \cap A = \Z[\sqrt{-2}]$.

As $v_2^{N}(\sqrt{-2}) = 1$ 
is minimal on the part of $\ringO$ on which $ v_2^{N}$ is positive,
we can choose $\sqrt{-2}$ as a uniformizer.
Then Theorem~\ref{decomposition into amalgam} yields that
$$\SLtwo( \Z[\sqrt{-2}][\frac{1}{2}])  \cong \SLtwo(\Z[\sqrt{-2}]) \ast_{\Gamma_0(\sqrt{-2})} \SLtwo(\Z[\sqrt{-2}]), $$
where $\Gamma_0(\sqrt{-2})$ is the subgroup of $\SLtwo(\Z[\sqrt{-2}])$ 
of upper triangular matrices modulo the ideal $(\sqrt{-2})$ of $\Z[\sqrt{-2}]$, injecting
\begin{itemize}
 \item as the natural inclusion into the first factor of type $\SLtwo(\Z[\sqrt{-2}])$,
 \item via the formula
$$ \mat \mapsto \begin{pmatrix}  d & {\sqrt{-2}\thinspace\thinspace}^{-1}c  \\  b\sqrt{-2} & a \end{pmatrix}$$
 into the second factor of type $\SLtwo(\Z[\sqrt{-2}])$.
\end{itemize}
We note that $v_2^{N}(2) = 2$, so $2$ is not a uniformizer.

\section{The module over the Chern class ring} \label{general proof}
In this section, we shall explain how Theorem~\ref{general theorem} can be proven with Duflot's ideas~\cite{Duflot},
our access to her ideas being via a theorem of Broto and Henn.
The referee has extracted the crucial argument necessary to use Duflot's ideas for this purpose, and distilled a direct proof. 
We are going to study her/his proof before we explain how to apply Broto and Henn's theorem.
The referee's proof works for $G$ being any subgroup of SL$_2(\C)$ which contains the order-2-subgroup $C := \{-1,1\}$.
We let $i: G \hookrightarrow $SL$_2(\C)$
and $j: C \hookrightarrow G$ denote the inclusions.
We recall from the literature on characteristic classes of vector bundles that
the second Chern class $c_2$ associated to the natural representation of SL$_2(\C)$
yields the cohomology ring structure 
$\cohomol^*({\rm SL}_2(\C);\thinspace \F_2) \cong \F_2[c_2]$, where $c_2$ is of degree~4. 
We recall also that $\cohomol^*(C;\thinspace \F_2) \cong \F_2[t]$, with $t$ of degree $1$, and that $(i \circ j)^*$ is injective
(namely, $(i \circ j)^*(c_2) = t^4$).
Then the referee states the following theorem, and we are going to look into how it is proven.
\begin{theorem}\label{referee}
 $\cohomol^*(G;\thinspace \F_2)$ is a free module over $\cohomol^*({\rm SL}_2(\C);\thinspace \F_2)$.
\end{theorem}
It has been observed in Duflot's work~\cite{Duflot}, 
pursued by Dwyer and Wilkerson~\cite{Dwyer--Wilkerson} and then by Broto and Henn~\cite{BrotoHenn},
that the group morphisms \begin{center}
$(g,c) \mapsto gc $, SL$_2(\C) \times C \to $SL$_2(\C)$ respectively $G \times C \to G$                           
                         \end{center}
provide $\cohomol^*({\rm SL}_2(\C);\thinspace \F_2)$ respectively $\cohomol^*(G;\thinspace \F_2)$ with a comodule structure
with respect to $\cohomol^*(C;\thinspace \F_2)$, given by an induced morphism
$$\delta : \cohomol^*({\rm SL}_2(\C);\thinspace \F_2) \to \cohomol^*({\rm SL}_2(\C);\thinspace \F_2) \otimes \cohomol^*(C;\thinspace \F_2),$$
respectively
$$\delta : \cohomol^*(G;\thinspace \F_2) \to \cohomol^*(G;\thinspace \F_2) \otimes \cohomol^*(C;\thinspace \F_2),$$
such that the induced morphism $i^*$ is a comodule map.
In order to prove that  $\cohomol^*(G;\thinspace \F_2)$ is free, it only remains to prove that multiplication with $i^*(c_2)$ is injective.
We shall do this in the following lemma of the referee.
\begin{lemma}
 If $\alpha \in \cohomol^*(G;\thinspace \F_2)$ is a non-zero element, then $i^*(c_2)\cup \alpha$ is non-zero.
 \end{lemma}
\begin{proof}
 Making use of the above comodule structure, it is of course enough to show that $\delta(i^*(c_2)\cup\alpha)$ is non-zero.
 We easily see that $\delta(i^*(c_2)) = i^*(c_2) \otimes 1 + 1 \otimes t^4$, and therefore
 $$ \delta(i^*(c_2)\cup \alpha) = \delta(i^*(c_2))\cup \delta(\alpha) = (i^*(c_2) \otimes 1 + 1 \otimes t^4)\cup\delta(\alpha).$$
 Let $r$ be the largest integer such that $\delta(\alpha)$ may be written as 
 $$\alpha\otimes 1 +\sum_{|\beta| > |\alpha|-r}\beta\otimes t^{|\alpha|-|\beta|}+\sum_{|\gamma|=|\alpha|-r}\gamma\otimes t^r,$$
 where the last term is non-trivial (it might be equal to $1 \otimes t^{|\alpha|}$), which is possible since $\alpha$ was assumed to be non-zero.
 In the product $(i^*(c_2) \otimes 1 + 1 \otimes t^4)\cup\delta(\alpha)$, the term $\sum_{|\gamma|=|\alpha|-r}\gamma\otimes t^{r+4}$
 cannot cancel, as all other terms are of lower degree on the second factor of the tensor product. This proves the lemma.
\end{proof}
This completes the proof of Theorem~\ref{referee}, so in particular we have the weaker statement presented as Theorem~\ref{general theorem}.

The above proof of Theorem~\ref{general theorem} can be reformulated in the following way using Broto and Henn's theorem.
For this purpose, we define the \textit{depth} of an ideal $I$ in a finitely generated module $M$ 
over a Noetherian ring as in standard commutative algebra textbooks~\cite{Matsumura}:
We assume $I\cdot M \neq M$.
A sequence of elements $x_1, \hdots, x_n$ of $I$ of positive degree is called \textit{regular} on $M$
if $x_1$ is not a zero divisor on $M$ and $x_{i+1}$ is not a zero divisor on the quotient
$M/_{(x_1,\hdots,x_i)M}$.
Under these conditions, any two maximal regular sequences have the same finite length, called the \textit{depth}.

The following lemma is the special case of depth $1$ of a classical commutative algebra result.
The reader can easily work out a proof for it.

\begin{lemma}\label{commutative algebra}
 Let $K$ be a field and $A$ be a positively graduated, connected $K$-algebra. Let $M$ be a graduated $A$-module with $M_n = 0$
 for $n < 0$ and $M_n$ a finite-dimensional $K$-vector space for all $n \in \N \cup \{0\}$.
 Let $x$ be an element in the maximal ideal of $A$ which operates on $M$ injectively by multiplication.
 Then $M$ is a free $K[x]$-module.
\end{lemma}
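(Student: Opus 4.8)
The plan is to exhibit an explicit free basis by lifting a homogeneous $K$-basis of the quotient $M/xM$, in the spirit of a graded Nakayama argument, reserving the injectivity hypothesis for the linear-independence half. First I would record that, since $A$ is positively graded and $x$ lies in the maximal ideal $\bigoplus_{n>0}A_n$, the element $x$ is homogeneous of some degree $d>0$; its powers $1,x,x^2,\dots$ then lie in the distinct degrees $0,d,2d,\dots$, so they are $K$-linearly independent and $K[x]$ is genuinely a polynomial ring. Multiplication by $x$ is therefore a degree-$d$ endomorphism of $M$ which is injective by hypothesis. I would form the graded quotient $\overline{M}:=M/xM$, choose a homogeneous $K$-basis $\{\overline{e}_i\}_{i\in I}$ of it, lift each $\overline{e}_i$ to a homogeneous element $e_i\in M$ of the same degree, and claim that $\{e_i\}_{i\in I}$ is a free $K[x]$-basis of $M$.

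For generation, let $N\subseteq M$ be the graded $K[x]$-submodule generated by the $e_i$; by construction $N+xM=M$. Suppose $N\neq M$. Since $M_n=0$ for $n<0$ and each $M_n$ is finite dimensional over $K$, there is a least degree $n$ with $N_n\subsetneq M_n$; pick $m\in M_n\setminus N_n$. By homogeneity I can write $m=n'+xm'$ with $n'\in N_n$ and $m'\in M_{n-d}$. As $n-d<n$, minimality of $n$ gives $M_{n-d}=N_{n-d}$, so $m'\in N$, whence $xm'\in N$ and $m=n'+xm'\in N_n$, a contradiction. Thus $N=M$.

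For independence, where injectivity enters, suppose $\sum_i p_i(x)e_i=0$ with $p_i\in K[x]$ not all zero, and let $k$ be the smallest exponent occurring in any of the $p_i$. Factoring gives $p_i(x)=x^k\widetilde{p}_i(x)$ with the constant terms $\widetilde{p}_i(0)$ not all zero, and the relation becomes $x^k\sum_i\widetilde{p}_i(x)e_i=0$. Injectivity of multiplication by $x^k$ forces $\sum_i\widetilde{p}_i(x)e_i=0$; since $\widetilde{p}_i(x)-\widetilde{p}_i(0)\in xK[x]$, reducing modulo $xM$ leaves $\sum_i\widetilde{p}_i(0)\,\overline{e}_i=0$ in $\overline{M}$, contradicting the $K$-linear independence of the $\overline{e}_i$. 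Hence every relation is trivial and $M$ is free on $\{e_i\}$.

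The main obstacle I anticipate is running this Nakayama-type argument without assuming that $M$ is finitely generated as an $A$-module: it is precisely the finiteness of each graded piece $M_n$ together with the vanishing $M_n=0$ for $n<0$ that allows the minimal-degree induction to replace the usual finiteness input, while the injectivity of multiplication by $x$ is exactly what upgrades ``generating modulo $xM$'' to genuine $K[x]$-freeness.
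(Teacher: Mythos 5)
Your proof is correct in substance, but there is nothing in the paper to compare it against: the authors explicitly decline to prove Lemma~\ref{commutative algebra}, remarking only that it is the depth-$1$ case of a classical commutative algebra result and that ``the reader can easily work out a proof for it.'' What you have written is exactly the classical argument they are alluding to --- lift a homogeneous $K$-basis of $M/xM$, prove generation by a minimal-degree induction that uses $M_n=0$ for $n<0$ in place of Nakayama's finiteness hypothesis, and prove $K[x]$-independence by stripping the lowest power of $x$ via the injectivity hypothesis and reducing modulo $xM$. All three steps check out: the graded decomposition $M_n = E_n + xM_{n-d}$ justifies your splitting $m = n' + xm'$, the least offending degree exists because degrees are bounded below, and in the independence step the index realizing the minimal $x$-order indeed yields $\widetilde{p}_i(0)\neq 0$. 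Your closing remark correctly identifies the division of labor between the hypotheses (in fact the finite-dimensionality of the $M_n$ is never really used; boundedness below and gradedness suffice).

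One inaccuracy to fix in the write-up: your opening claim that $x$ ``is homogeneous of some degree $d>0$'' because it lies in the maximal ideal $\bigoplus_{n>0}A_n$ is a non sequitur --- elements of the augmentation ideal need not be homogeneous (e.g.\ $a_1+a_2$ with $a_i\in A_i$), and your entire argument (gradedness of $xM$, of $M/xM$, and of the submodule $N$) depends on homogeneity. The correct move is to \emph{assume} $x$ homogeneous of positive degree, which is evidently the intended reading: the paper's definition of a regular sequence speaks of elements ``of positive degree,'' and in the application the element is the Chern class $e_4$, homogeneous of degree $4$. A related small point: distinctness of the degrees of $1,x,x^2,\dots$ shows linear independence only of the \emph{nonzero} powers, so to conclude $K[x]$ is polynomial you should add that $x$ cannot be nilpotent when $M\neq 0$, since $x^k m\neq 0$ for $m\neq 0$ by injectivity (and for $M=0$ the freeness claim is vacuous). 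Neither point damages the argument, but both should be stated correctly.
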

%

\begin{proof}[Proof of Theorem~\ref{general theorem}]
Theorem 1.1 of \cite{BrotoHenn} states, in its variant provided by remark 2.3 of the same paper: 
\\
\textit{Let $p$ be a prime, $G$ be a discrete group of finite virtual cohomological dimension, 
$X$ a $G$-space and $C$ a central elementary Abelian $p$-subgroup of $G$ acting trivially on $X$.
If $\cohomol^*(X, \thinspace \F_p)$ is finite dimensional over $\F_p$, then the depth of $\cohomol_G^*(X, \thinspace \F_p)$
is at least as big as the rank of $C$.}

We set $p = 2$, and $G$ be a discrete subgroup of SL$_2(\C)$, of finite virtual cohomological dimension, containing $C = \{-1, 1\}$.
Then as we know from Borel and Serre~\cite{BorelSerre}, 
$G$ acts properly discontinuously on the space $X$ constructed as the direct product of ${\rm SL}_2({\mathbb{C}})/{\rm SU}_2$
and a Bruhat-Tits building. 
Note that in the case $G = {\rm SL}_2(\imQuadRing[\frac{1}{2}])$ that we have in mind, 
the Bruhat-Tits building is associated to the $2$-adic group $\SLtwo(\Q(\sqrt{-m})_2)$.
Moreover, $C$ acts trivially on $X$. 
As $X$ is finite-dimensional, the virtual cohomological dimension of the discrete group $G$ is finite.
We consider $\cohomol_G^*(X, \thinspace \F_2)$ as an $\cohomol^*(G, \thinspace \F_2)$-module via the algebra map
$\cohomol^*(G, \thinspace \F_2) \to \cohomol_G^*(X, \thinspace \F_2)$ induced by projection from $X$ to a point.
Then as $\cohomol^*(X, \thinspace \F_2)$ is finite dimensional over $\F_2$,
Broto and Henn's above theorem provides us at least depth $1$ for the maximal ideal in $\cohomol_G^*(X, \thinspace \F_2)$ 
constituted by the elements of strictly positive degree,
and hence a regular sequence $x_1, \hdots, x_n$ with $n \geq 1$, where $x_1 \in \cohomol^*(G, \thinspace \F_2)$ is of strictly positive degree and operates on 
$\cohomol_G^*(X, \thinspace \F_2)$ injectively by multiplication.
Now we can apply Lemma~\ref{commutative algebra} in order to obtain the free module structure on $\cohomol_G^*(X, \thinspace \F_2)$
As $X$ is contractible, $\cohomol^*(G, \thinspace \F_2) = \cohomol_G^*(X, \thinspace \F_2)$.
\end{proof}

\section{The cell complex for the congruence subgroup} \label{The cell complex}
Due to the above amalgamated decomposition, we want to study the action of the congruence subgroup 
$${\Gamma_0(\sqrt{-2})}  := \left\{ \left. \tiny \mat \normalsize \in \BianchiGrp \medspace \right| \medspace c \in \langle \sqrt{-2} \rangle \right\}$$
 in the Bianchi group $\BianchiGrp$ on a suitable cell complex
(a 2-dimensional retract of hyperbolic 3-space). 
For this purpose, we are in the fortunate situation that the symmetric space ${\rm SL}_2({\mathbb{C}})/$SU$_2$ acted on by $\BianchiGrp$
is isometric to real hyperbolic 3-space $\Hy$. We can use the upper half-space model for $\Hy$, where as a set,
$ \Hy = \{ (z,\zeta) \in \C \times \R \medspace | \medspace \zeta > 0 \}. $
Then we can use Poincar\'e's explicit formulas for the action: For $\gamma = \scriptsize \mat \normalsize \in \mathrm{GL}_2(\C)$, 
the action of $\gamma$ on $\Hy$ is given by $\gamma \cdot (z,\zeta) = (z',\zeta')$, where
$$ z' = \frac{\left(\thinspace\overline{cz+d}\thinspace\right)(az+b) +\zeta^2\bar{c}a}{|cz+d|^2 +\zeta^2|c|^2},
\qquad \zeta' = \frac{|\det \gamma|\zeta}{|cz+d|^2 +\zeta^2|c|^2}.$$
Luigi Bianchi~\cite{Bianchi} has constructed a fundamental polyhedron for the action on $\Hy$ of $\BianchiGrp$,
and we use Lakeland's method~\cite{BLR}*{Section 6} to find a set translates of it under elements of $\BianchiGrp$  outside ${\Gamma_0(\sqrt{-2})}$,
which constitute a fundamental domain $\mathcal{F}$ for ${\Gamma_0(\sqrt{-2})}$, strict in its interior:
No two points in its interior can be identified by the action of an element of ${\Gamma_0(\sqrt{-2})}$.
A cumbersome aspect of Bianchi's fundamental polyhedron, and hence also of $\mathcal{F}$, is that it is not compact, but open at cusps which sit in the boundary of $\Hy$.
We shall remedy this aspect with an $\BianchiGrp$-equivariant retraction, namely a retraction of $\Hy$ onto a $2$-dimensional cell complex, along geodesic arcs away from the cusps,
which commutes with the $\BianchiGrp$-action.
Our choice of $\mathcal{F}$, and on it Mendoza's $\BianchiGrp$-equivariant retraction~\cite{Mendoza} 
away from all cusps, are depicted in Figure~\ref{fundamental domains}(A).
An alternative choice would have been to use Fl\"oge's $\BianchiGrp$-equivariant retraction~\cite{Floege} away only from cusps in the ${\Gamma_0(\sqrt{-2})}$-orbit of $\infty$,
and then to use a Borel-Serre compactification on the remaining cusps; the outcome of that alternative is shown in~\cite{BLR}*{figure for $\congruenceGrp$}.
In Figure~\ref{fundamental domains}(B), we list the coordinates of the vertices of $\mathcal{F}$,
and in Figure~\ref{fundamental domains}(C), we display the compact fundamental domain for ${\Gamma_0(\sqrt{-2})}$
obtained from $\mathcal{F}$ in the $2$-dimensional retract of $\Hy$. By boundary identifications on the latter compact fundamental domain, 
we obtain the quotient space of the $2$-dimensional retract modulo the ${\Gamma_0(\sqrt{-2})}$-action, as drawn in Figure~\ref{fundamental domains}(D).
\begin{figure}
\mbox{
\begin{subfigure}[b]{0.6\textwidth}
\caption{Fundamental polyhedron for the action of $\Gamma_0(\sqrt{-2})$ on hyperbolic 3-space.
We retract it $\BianchiGrp$-equivariantly away from the cusps at $0$ and $\infty$, along the dotted edges.}
\label{fundamental polyhedron}
  \includegraphics[width=7cm]{./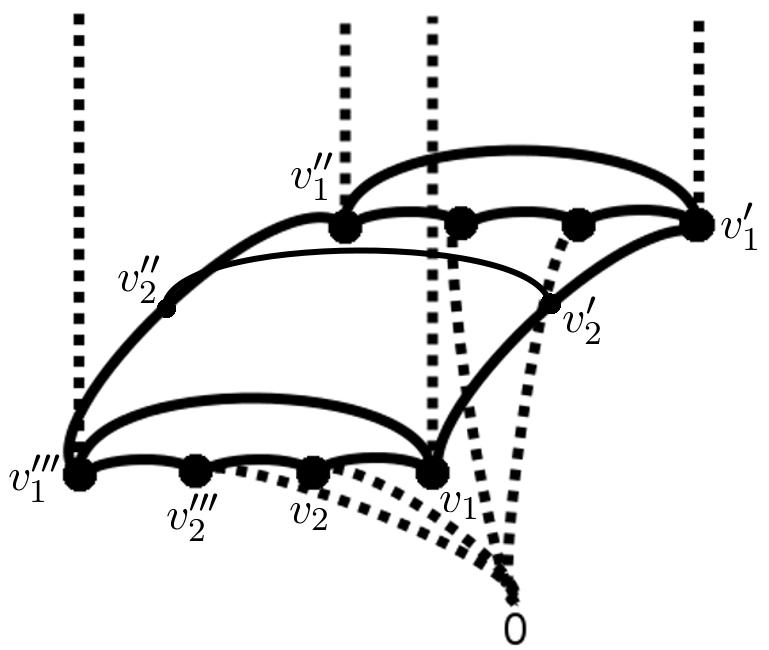}
    \end{subfigure}
     \begin{subfigure}[b]{0.4\textwidth}
\caption{Coordinates of the above vertices in upper half-space. 
Denote $\sqrt{-2}$ by $\omega$, denote the height square by $\zeta^2$ 
and project to the boundary plane at height $\zeta = 0$.}
$\begin{array}{|c|c|c|}
   \hline
   \text{Vertex} & \text{Projection} \medspace z & \zeta^2 \\
   \hline &&\\
   v_1	& -\frac{1}{2}-\frac{\omega}{2} &	1/4 \\
   v_1' &  \frac{1}{2}-\frac{\omega}{2} &	1/4 \\   
   v_1''& \frac{1}{2}+\frac{\omega}{2} &	1/4 \\
   v_1'''& -\frac{1}{2}+\frac{\omega}{2} &	1/4 \\   
   v_2  & -\frac{1}{2}-\frac{\omega}{4} &	1/8 \\
   v_2'  & -\frac{\omega}{2} 		&	1/2 \\
   v_2''  & \frac{\omega}{2} 		&	1/2 \\   
   v_2'''  & -\frac{1}{2}+\frac{\omega}{4} &	1/8 \\
   \hline
 \end{array}$ 
 \end{subfigure}
}
\mbox{
\begin{subfigure}[b]{0.5\textwidth}
\caption{A fundamental domain (strict in its interior) for the action of $\Gamma_0(\sqrt{-2})$ 
on the $2$-dimensional retract is given by the three quadrangles with marked vertices from Figure~\ref{fundamental domains}(A).}
\label{fundamental domain}
 \scalebox{0.85} 
{
\begin{pspicture}(-0.5,-0.7)(6.5,2.7)
\psframe[linewidth=0.04,dimen=outer](0,0)(6,2)
\psdots[dotsize=0.2](0,0)
\psdots[dotsize=0.2](0,2)
\psdots[dotsize=0.2](2,0)
\psdots[dotsize=0.2](2,2)
\psline[linewidth=0.04](2,0)(2,2)
\psdots[dotsize=0.2](4,0)
\psdots[dotsize=0.2](4,2)
\psdots[dotsize=0.2](6,2)
\psline[linewidth=0.04](4,0)(4,2)
\psdots[dotsize=0.2](6,0)
\rput(-0.3,-0.3){$v_2$}
\rput(-0.3,2.3){$v_2'''$}
\rput(2.0,-0.4){$v_1$}
\rput(4.0,-0.4){$v_2'$}
\rput(4.0,2.4){$v_2''$}
\rput(6.3,-0.3){$v_1'$}
\rput(2.0,2.4){$v_1'''$}
\rput(6.3,2.3){$v_1''$}
\rput(-0.3,1){$\langle C \rangle$}
\rput(1,-0.4){$\langle B \rangle$}
\rput(3,-0.4){$\langle A \rangle$}
\rput(5,-0.4){$\langle A \rangle$}
\rput(4.3,1){$\langle c \rangle$}
\rput(6.4,1){$\langle b \rangle$}
\psline[linewidth=0.04](2,1)(1.9,0.9)
\psline[linewidth=0.04](2,1)(2.1,0.9)
\psline[linewidth=0.04](6,1)(5.9,0.9)
\psline[linewidth=0.04](6,1)(6.1,0.9)
\psline[linewidth=0.04](1,0)(0.9,0.1)
\psline[linewidth=0.04](1,0)(0.9,-0.1)
\psline[linewidth=0.04](1.1,0)(1.0,0.1)
\psline[linewidth=0.04](1.1,0)(1.0,-0.1)
\psline[linewidth=0.04](1,2)(0.9,2.1)
\psline[linewidth=0.04](1,2)(0.9,1.9)
\psline[linewidth=0.04](1.1,2)(1.0,2.1)
\psline[linewidth=0.04](1.1,2)(1.0,1.9)
\psline[linewidth=0.04](3,0)(2.9,0.1)
\psline[linewidth=0.04](3,0)(2.9,-0.1)
\psline[linewidth=0.04](3.1,0)(3.0,0.1)
\psline[linewidth=0.04](3.1,0)(3.0,-0.1)
\psline[linewidth=0.04](3,2)(2.9,2.1)
\psline[linewidth=0.04](3,2)(2.9,1.9)
\psline[linewidth=0.04](3.1,2)(3.0,2.1)
\psline[linewidth=0.04](3.1,2)(3.0,1.9)
\psline[linewidth=0.04](2.9,2)(2.8,2.1)
\psline[linewidth=0.04](2.9,2)(2.8,1.9)
\psline[linewidth=0.04](2.9,0)(2.8,0.1)
\psline[linewidth=0.04](2.9,0)(2.8,-0.1)
\psline[linewidth=0.04](5,0)(4.9,0.1)
\psline[linewidth=0.04](5,0)(4.9,-0.1)
\psline[linewidth=0.04](5.2,0)(5.1,0.1)
\psline[linewidth=0.04](5.2,0)(5.1,-0.1)
\psline[linewidth=0.04](5.2,2)(5.1,2.1)
\psline[linewidth=0.04](5.2,2)(5.1,1.9)
\psline[linewidth=0.04](5.1,0)(5.0,0.1)
\psline[linewidth=0.04](5.1,0)(5.0,-0.1)
\psline[linewidth=0.04](5,2)(4.9,2.1)
\psline[linewidth=0.04](5,2)(4.9,1.9)
\psline[linewidth=0.04](5.1,2)(5.0,2.1)
\psline[linewidth=0.04](5.1,2)(5.0,1.9)
\psline[linewidth=0.04](4.9,2)(4.8,2.1)
\psline[linewidth=0.04](4.9,2)(4.8,1.9)
\psline[linewidth=0.04](4.9,0)(4.8,0.1)
\psline[linewidth=0.04](4.9,0)(4.8,-0.1)
\end{pspicture}
}
    \end{subfigure}
    \quad
    \begin{subfigure}[b]{0.45\textwidth}
\caption{Quotient space of the latter fundamental domain by its edge identifications.}
 \includegraphics[height=4cm]{./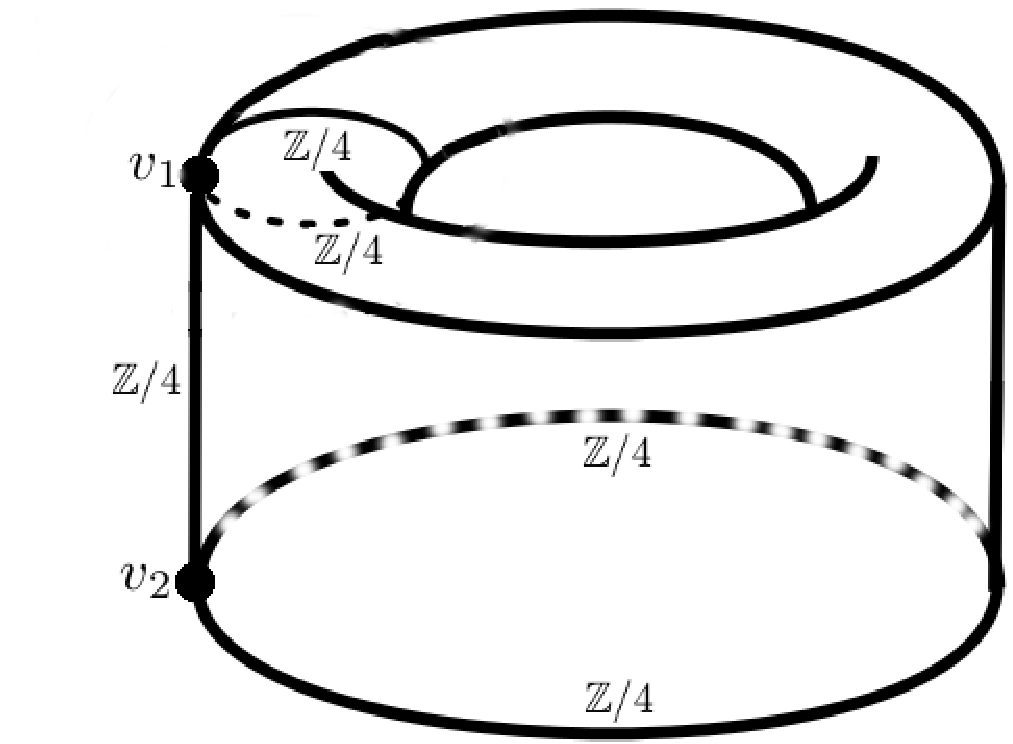}
 \end{subfigure}
 }
 \caption{The cell complex for $\congruenceGrp$}
 \label{fundamental domains}
\end{figure}
The fundamental domain in Figure~\ref{fundamental domains}(C) is subject to edge identifications $\gg$, $\ggg$ and \includegraphics[height=0.3cm]{./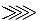} carried out by 
$\begin{pmatrix}
 1 & 0 \\ -\sqrt{-2} & 1
\end{pmatrix}$, and $>$ carried out by $\begin{pmatrix}
 1 & 1 \\ 0 & 1
\end{pmatrix}$, both of which are in $\congruenceGrp$.
With the notation $\omega := \sqrt{-2}$, 
the non-trivial edge stabilizers are generated by the order-4-matrices
$$A = \begin{pmatrix}
 1 & \omega \\ \omega & -1
\end{pmatrix}, \quad
B = \begin{pmatrix}
 -1-\omega & -\omega \\ 2 & 1+\omega
\end{pmatrix}, \quad
C = \begin{pmatrix}
 -1 & -1 \\ 2 & 1
\end{pmatrix},$$
respectively their conjugates by the above mentioned edge identifications.
A fundamental domain for $\BianchiGrp$ is given by the quadrangle $(v_2', v_1', v_1'', v_2'')$.
In $\BianchiGrp$, there are two additional edge stabilizer generators, 
\begin{center}
 $b = \begin{pmatrix}
 1 & -1 \\ 1 & 0 \end{pmatrix}, 
$ of order 6, and $
c = \begin{pmatrix}
0 & -1 \\ 1 & 0
\end{pmatrix}$, of order 4,
\end{center}
which are not in $\congruenceGrp$.

\subsection{The cohomology of the congruence subgroup} \label{The cohomology of the congruence subgroup}
From Figure~\ref{fundamental domains}(D), 
we see that the orbit space of the 2-dimensional retract $X$ of hyperbolic space
under the action of $\congruenceGrp$ has the homotopy type of a 2-torus, so
\begin{center} $\dim_{\ftwo} \cohomol^p(_{\congruenceGrp} \backslash X; \thinspace \ftwo) = $
\scriptsize $\begin{cases}
								    0, & p > 2,\\
                                                                    1, & p = 2,\\
                                                                    2, & p = 1,\\
                                                                    1, & p = 0.\\
                                                                   \end{cases}
$\normalsize \end{center}
We also see in Figure~\ref{fundamental domains}(D) that the 
{non-central $2$-torsion subcomplex} $X_s$, 
namely the union of the cells of $X$ whose cell stabilizers in~$\congruenceGrp$  
contain elements of order a power of $2$, and which are not in the center of~$\congruenceGrp$,
has an orbit space $_\congruenceGrp \backslash X_s$ of shape $\dumbbellgraph$.
Following~\cites{BerkoveRahm, BLR}, we define the co-rank $c$ 
to be the rank of the cokernel of 
$$\cohomol^{1} (_\congruenceGrp \backslash X; \thinspace \ftwo) 
\rightarrow \cohomol^{1} (_\congruenceGrp \backslash X_s; \thinspace \ftwo) $$
induced by the inclusion $X_s \subset X$. 
Again inspecting Figure~\ref{fundamental domains}(D), we can see that the co-rank $c$ vanishes.
Concerning the equivariant spectral sequence discussed in Section~\ref{The maps on equivariant spectral sequences} below,
the $d_2^{p,2q}$ differentials for $p > 0$ are trivial for degree reasons.
Furthermore, we obtain the vanishing of the $d_2^{0,q}$ differentials from lemmas in~\cite{BLR}:
lemma~19 for $d_2^{0,4q+2}$, lemma 21 for $d_2^{0,4q}$ and lemma 25 for $d_2^{0,4q+3}$.
Grant S. \mbox{Lakeland} did use classical  group-geometric  methods (see for instance~\cite{Macbeath}) 
to compute for us from the information presented in Figure~\ref{fundamental domains}
a presentation for $\congruenceGrp$,
$$\congruenceGrp = \langle \alpha, \beta, \gamma, \delta \thinspace | \thinspace \alpha\cdot \beta\cdot \alpha^{-1}\cdot \beta^{-1}, \gamma\cdot \delta\cdot \gamma^{-1}\cdot \delta^{-1}, (\alpha\cdot \delta)^2, (\beta\cdot \gamma)^2, (\alpha\cdot \gamma\cdot \alpha^{-1}\cdot \gamma^{-1})^2\rangle,$$
where 
$\alpha = \begin{pmatrix}
           1& 1 \\ 0 & 1\\
          \end{pmatrix}$,
$\beta = \begin{pmatrix}
           1& \sqrt{-2} \\ 0 & 1\\
          \end{pmatrix}$,
          $\gamma = \begin{pmatrix}1 & 0 \\ -\sqrt{-2} & 1\\
                    \end{pmatrix},
\delta = \begin{pmatrix}
           1 & 0 \\ -2 & 1\\
         \end{pmatrix}$.
So we obtain the Abelianization $\Z^2 \oplus (\Z/2\Z)^2 \cong (\congruenceGrp)^\text{ab} \cong 
\cohomol_1(\congruenceGrp ; \thinspace \Z)$, 
which we insert into the d\'evissage of the equivariant spectral sequence to conclude that the 
$d_2^{0,4q+1}$-differentials vanish as well.
Then using~\cite{BLR}*{theorem 1}, we obtain the following result.
\begin{proposition}
 \begin{center} $\dim_{\ftwo} \cohomol^p(\congruenceGrp; \thinspace \ftwo) = $
\scriptsize $\begin{cases}
								    5, & p \equiv 4 \text{ or }5\mod 4 \text{ and }p>1,\\
                                                                    6, & p \equiv 2 \text{ or }3\mod 4,\\
                                                                    4, & p = 1,\\
                                                                    1, & p = 0.\\
                                                                   \end{cases}
$\normalsize \end{center}
\end{proposition}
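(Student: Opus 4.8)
The plan is to run the equivariant cohomology spectral sequence attached to the action of $\congruenceGrp$ on the contractible $2$-dimensional retract $X$ of $\Hy$, in the torsion-subcomplex formulation of \cite{BerkoveRahm} and \cite{BLR}. Since $X$ is contractible one has $\cohomol^*_{\congruenceGrp}(X;\ftwo)=\cohomol^*(\congruenceGrp;\ftwo)$, and the spectral sequence reads
$$E_1^{p,q} = \bigoplus_{\sigma \in \Sigma_p} \cohomol^q(\Gamma_\sigma; \ftwo) \Longrightarrow \cohomol^{p+q}(\congruenceGrp; \ftwo),$$
where $\Sigma_p$ is a set of $\congruenceGrp$-orbit representatives of the $p$-cells and $\Gamma_\sigma$ denotes the stabilizer of $\sigma$; as $X$ is $2$-dimensional only the columns $p\in\{0,1,2\}$ occur. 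The guiding principle is that the cells lying outside the non-central $2$-torsion subcomplex $X_s$ assemble, after the $d_1$-differential, into the cohomology of the quotient $_{\congruenceGrp}\backslash X \simeq \sphere^1\times\sphere^1$ (Betti numbers $1,2,1$), while the cells of $X_s$ produce the $2$-primary periodic part.

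First I would pin down the $E_1$-page by computing the mod $2$ cohomology of the cell stabilizers read off Figure~\ref{fundamental domains}(D). Every stabilizer contains the centre $\langle -I\rangle\cong\Z/2$; the non-central $2$-torsion is organized by the dumbbell orbit space $_{\congruenceGrp}\backslash X_s$, whose edges carry $\Z/4$ and whose two bulbs carry the quaternion group $\Qe$, generated by the order-$4$ matrices $A$, $B$, $C$. It is exactly the $4$-periodic mod $2$ cohomology of $\Qe$, with Poincar\'e series $\tfrac{1+2t+2t^2+t^3}{1-t^4}$ and hence dimensions $1,2,2,1$ repeating with period $4$, that forces the mod $4$ periodicity visible in the statement; indeed the outcome will be a free module over $\ftwo[e_4]$ with $e_4$ of degree $4$, in harmony with Theorem~\ref{general theorem}.

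Next I would compute $d_1$ to reach the $E_2$-page and then show that no later differential survives. The two vanishing inputs are precisely those isolated in the text preceding the statement. Inspecting Figure~\ref{fundamental domains}(D) shows that the inclusion-induced map $\cohomol^1(_{\congruenceGrp}\backslash X;\ftwo)\to\cohomol^1(_{\congruenceGrp}\backslash X_s;\ftwo)$ is onto, the two loops of the dumbbell being hit by the two generators of $\cohomol^1(\sphere^1\times\sphere^1;\ftwo)$; hence the co-rank $c$ vanishes and \cite{BLR}*{lemma 20} kills the differentials $d_2^{p,2q}$. Reading a presentation of $\congruenceGrp$ off the edge identifications and stabilizer relations of the fundamental domain then yields the abelianization $(\congruenceGrp)^{\mathrm{ab}}$, from which the differentials $d_2^{p,4q+1}$ are seen to vanish as well. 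With all $d_2$ trivial (and the higher differentials trivial for degree reasons) one has $E_\infty=E_2$, and \cite{BLR}*{corollary 27} packages the surviving groups into the claimed Poincar\'e series, which I would finally read off degree by degree to recover $1,4,6,6,5,5,6,6,5,\dots$.

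The main obstacle will be the vanishing of the $d_2$-differentials rather than the bookkeeping on the $E_1$-page. Establishing $c=0$ rests on the explicit geometry, namely that $X_s\hookrightarrow X$ is surjective on first cohomology, so any miscount of the loops of the dumbbell or of the way the torus sits over it would corrupt the conclusion; and killing the $d_2^{p,4q+1}$ requires an honest presentation of $\congruenceGrp$, where the delicate point is to verify correctly which vertices carry $\Qe$ rather than merely $\Z/4$. Once these two inputs are secured, the appeal to \cite{BLR}*{corollary 27} is routine, and the low-degree anomalies at $p=0,1$ (where the torus cohomology has not yet been overtaken by the periodic part) fall out directly from the same $E_2$-page.
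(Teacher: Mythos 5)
Your proposal follows essentially the same route as the paper's own proof: the equivariant spectral sequence for the $\congruenceGrp$-action on the $2$-dimensional retract $X$, the torus homotopy type of $_{\congruenceGrp}\backslash X$ and the dumbbell-shaped orbit space of the non-central $2$-torsion subcomplex, vanishing of the co-rank $c$ to kill the $d_2^{p,2q}$-differentials via \cite{BLR}*{lemma 20}, vanishing of the $d_2^{p,4q+1}$-differentials via the abelianization of $\congruenceGrp$, and the final appeal to \cite{BLR}*{corollary 27}. Your added details (the $\Qe$ and $\Z/4$ stabilizer bookkeeping and the resulting $4$-periodic Poincar\'e series, which correctly reproduces the dimensions $1,4,6,6,5,5,\dots$) are consistent with, and implicit in, the paper's argument.
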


\section{The maps on equivariant spectral sequences} \label{The maps on equivariant spectral sequences}
In order to evaluate the Mayer--Vietoris long exact sequence of the amalgamated decomposition 
$ \arithGrp \cong \BianchiGrp *_{\congruenceGrp} \BianchiGrp $, 
we need to find out what the two injections of $\congruenceGrp$ into $\BianchiGrp$,
namely $i$ (the natural inclusion) 
and $j$ (the conjugation map defined in Theorem~\ref{decomposition into amalgam})
induce on the mod $2$ cohomology rings of these groups.
For this purpose, we make use of the equivariant spectral sequences
\begin{center}
$E_1^{p,q} = \bigoplus_{\sigma \in \text{representatives}(_\Gamma \backslash X^p)} \cohomol^q(\Gamma_\sigma ; \thinspace \ftwo)$
converging to $\cohomol^{p+q}(\Gamma ; \thinspace \ftwo)$,
\end{center}
for $\Gamma$ being either $\congruenceGrp$ or $\BianchiGrp$ 
(cf. Brown's book~\cite{Brown}*{chapter VII} for the construction of equivariant spectral sequences).
The notation $\Gamma_\sigma$ stands for the stabilizer in $\Gamma$ of the $p$-cell $\sigma$,
and the $p\text{-cells}$ indexing the above direct sum run through a set of orbit representatives.
As we let the $p\text{-cells}$ for $\BianchiGrp$ run through a fundamental domain contained in the one for $\congruenceGrp$, 
we get two compatible equivariant spectral sequences, and can compute the maps on $\cohomol^{p+q}(\Gamma ; \thinspace \ftwo)$
from the maps between the two $E_1$-pages.
In Section~\ref{The cohomology of the congruence subgroup}, 
we have seen that the $d_2$-differentials are all trivial for $\congruenceGrp$, and having in mind the cylindrical shape of 
$_{\BianchiGrp} \backslash X$ obtained from the edge identification on the quadrangle 
$(v_2', v_1', v_1'', v_2'')$, they are trivial as well for $\BianchiGrp$.
Therefore, this computation splits into two parts: 
firstly, the map on bottom rows 
$E_2^{p,0} \cong \cohomol^p(_\Gamma \backslash X; \thinspace \ftwo)$,
where $X$ is the 2-dimensional retract of hyperbolic space;
and secondly the maps supported on cells with 2-torsion in their stabilizers.
We can use the tools developed in~\cites{BerkoveRahm, BLR} to take advantage of this splitting.
The result of the first part is the following proposition.
\begin{proposition} \label{quotient spaces}
 The injections $i$ and $j$ induce a map
 $$\bigoplus_2 \cohomol^*(_{\BianchiGrp} \backslash X; \thinspace \ftwo)
 \aTop{(i^*,j^*)}{\longrightarrow}
 \cohomol^*(_{\congruenceGrp} \backslash X; \thinspace \ftwo)$$ 
 which is injective, respectively surjective, except for
 $\ker(i^0,j^0) \cong \ftwo$ and \\
 $\operatorname{coker}(i^2,j^2) \cong \ftwo$.
\end{proposition}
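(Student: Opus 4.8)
The plan is to reduce the statement to the known homotopy types of the two orbit spaces and then to make the two induced maps between them explicit. By Section~\ref{The cohomology of the congruence subgroup} the space ${}_{\congruenceGrp}\backslash X$ is homotopy equivalent to a $2$-torus, so $\cohomol^*({}_{\congruenceGrp}\backslash X;\thinspace\ftwo)$ has dimensions $1,2,1$ in degrees $0,1,2$; and the edge identifications of the single quadrangle $(v_2',v_1',v_1'',v_2'')$ exhibit ${}_{\BianchiGrp}\backslash X$ as a cylinder, homotopy equivalent to $S^1$, with $\cohomol^*$ of dimensions $1,1,0$. First I would fix explicit generating $\ftwo$-cocycles for both spaces, directly from the cell structures of Figure~\ref{fundamental domains}(C)--(D).

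Two of the three claims are then formal. In degree $0$ both orbit spaces are connected, so $i^0$ and $j^0$ are the identity of $\ftwo$ and $(i^0,j^0)\colon \ftwo\oplus\ftwo\to\ftwo$ is $(a,b)\mapsto a+b$, which is surjective with kernel the diagonal, giving $\ker(i^0,j^0)\cong\ftwo$. In degree $2$ the source $\bigoplus_2\cohomol^2({}_{\BianchiGrp}\backslash X;\thinspace\ftwo)$ vanishes, since the cylinder has no top cohomology; hence $(i^2,j^2)$ is trivially injective and $\operatorname{coker}(i^2,j^2)\cong\cohomol^2({}_{\congruenceGrp}\backslash X;\thinspace\ftwo)\cong\ftwo$.

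All the content sits in degree $1$, where $(i^1,j^1)\colon\ftwo\oplus\ftwo\to\ftwo^2$ must be shown to be an isomorphism; over $\ftwo$ it is enough to check that the two pulled-back classes $i^1(\xi)$ and $j^1(\xi)$ are linearly independent, for $\xi$ a generator of $\cohomol^1({}_{\BianchiGrp}\backslash X;\thinspace\ftwo)$. I would describe the two maps of orbit spaces geometrically. The natural inclusion $i$ induces the projection ${}_{\congruenceGrp}\backslash X\to{}_{\BianchiGrp}\backslash X$ that folds the three quadrangles of the $\congruenceGrp$-domain onto the single $\BianchiGrp$-quadrangle, the number $3$ being the index $[\BianchiGrp:\congruenceGrp]$, matching the residue field $\Z[\sqrt{-2}]/(\sqrt{-2})\cong\ftwo$. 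The map $j$ is the composite of the isometry of $X$ induced by $g=\begin{pmatrix}0&1\\ \sqrt{-2}&0\end{pmatrix}$ with the same projection. Since the M\"obius transformation attached to $g$ interchanges the cusps $0$ and $\infty$, it conjugates the upper-triangular parabolic identification $\begin{pmatrix}1&1\\0&1\end{pmatrix}$ to the lower-triangular one $\begin{pmatrix}1&0\\-\sqrt{-2}&1\end{pmatrix}$, hence exchanges the two $H_1$-generators of the torus carried by these identifications.

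The decisive step is to compute, at the level of cellular $\ftwo$-cochains read off from Figure~\ref{fundamental domains}(C)--(D), the two columns $i^1(\xi)$ and $j^1(\xi)$ and to verify that the resulting $2\times 2$ matrix over $\ftwo$ is nonsingular. I expect the projection $i$ to retain exactly one cusp direction of the torus and to collapse the other, while the cusp swap built into $j$ makes $j^1$ detect the complementary direction; the matrix is then the identity up to a change of basis, and $(i^1,j^1)$ is an isomorphism. The main obstacle will be precisely this bookkeeping: tracking how $g$ permutes the quadrangles and edges and how each parabolic identification contributes to $H_1$ of the torus, so as to be sure that $i^*$ and $j^*$ land on independent classes rather than on a common one --- which is the only way the degree-$1$ assertion could break down.
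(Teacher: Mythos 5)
Your handling of degrees $0$ and $2$ is complete and matches the paper, which dismisses these cases as obvious: connectedness of both orbit spaces gives $\ker(i^0,j^0)\cong\ftwo$ (the diagonal), and $\cohomol^2({}_{\BianchiGrp}\backslash X;\thinspace\ftwo)=0$ makes $(i^2,j^2)$ trivially injective with cokernel $\cohomol^2$ of the torus. The genuine gap is exactly where you flag it yourself: in degree $1$ you never prove that $i^1(\xi)$ and $j^1(\xi)$ are linearly independent; you write ``I expect the projection $i$ to retain exactly one cusp direction'' and defer the decisive $2\times 2$ nonsingularity check to future bookkeeping. Since the entire content of the proposition beyond the two formal degrees is that the two pullbacks hit the two \emph{different} generators of $\cohomol^1$ of the torus rather than a common class, what you have is an (accurate) plan, not a proof. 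There is also a technical wrinkle in the plan as stated: the element $g=\begin{pmatrix}0&1\\ \sqrt{-2}&0\end{pmatrix}$ normalizes $\congruenceGrp$ (it is a Fricke-type involution for $\Gamma_0(\sqrt{-2})$) but does not normalize $\BianchiGrp$, so it need not carry the $\BianchiGrp$-equivariant retract $X$ to itself; ``the isometry of $X$ induced by $g$'' must be corrected by the equivariant retraction $\Hy\to X$, after which the resulting map is not cellular for the given cell structures, so the naive cellular-cochain computation you propose is not directly available without subdivision or a homotopy-level argument.

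The paper closes the degree-$1$ step without any cochain bookkeeping, by tracking supports through conjugacy of edge stabilizers: the two generators of $\cohomol^1({}_{\congruenceGrp}\backslash X;\thinspace\ftwo)$ are supported on the loops coming from the edges stabilized by $C$, respectively $A$, and the Bianchi generator on the loop from the edge stabilized by $c$. The matrix $h=\begin{pmatrix}1&0\\1&1\end{pmatrix}$ carries the quadrangle $(v_2,v_1,v_1''',v_2''')$ to $(v_2',v_1',v_1'',v_2'')$ and conjugates $C$ to $c$, so $i$ sends the $C$-loop to the generator-supporting $c$-circle; and $j(A)=c^{-1}Cc\sim c$ (again via $h$) sends the $A$-loop there; meanwhile $i(A)=A$ and $j(C)=A$ land on the edge stabilized by $A$, whose arc supports the zero class in the cylinder. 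In dual bases $(i^1,j^1)$ is then the identity matrix. Your cusp-swap heuristic is in fact sound and can be made rigorous in the same spirit at the level of $\pi_1$: with $T=\begin{pmatrix}1&1\\0&1\end{pmatrix}$ and $P=\begin{pmatrix}1&0\\-\sqrt{-2}&1\end{pmatrix}$ one computes $j(T)=P^{-1}$ and $j(P)=T^{-1}$, while in $\pi_1$ of the Bianchi cylinder the class of $P$ winds once and the class of $T$ is trivial (its representative runs from the $b$-circle to the $c$-circle and back); hence $i^1\xi$ is dual to the $P$-direction and $j^1\xi$ to the $T$-direction of the torus. These two winding facts are precisely what your proposal still owes, and they are what the paper's stabilizer-conjugacy argument delivers in one stroke.
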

\begin{proof}
 The two generators of $\cohomol^1(_{\congruenceGrp} \backslash X; \thinspace \ftwo)$
 are supported on the loops obtained from the edges stabilized by to the matrices $A$, 
 respectively $C$.
 Considering again the cylindrical shape of 
$_{\BianchiGrp} \backslash X$ obtained from the edge identification on the quadrangle 
$(v_2', v_1', v_1'', v_2'')$,
the generator of $\cohomol^1(_{\BianchiGrp} \backslash X; \thinspace \ftwo)$
is supported on the loop obtained from the edge stabilized by the matrix $c$.
The matrices $c$ and $C = i(C)$ are conjugate in $\BianchiGrp$ via the matrix 
$h := \begin{pmatrix}
  1 & 0 \\ 1 & 1
 \end{pmatrix}$
which sends the quadrangle 
$(v_2, v_1, v_1''', v_2''')$ to the quadrangle 
$(v_2', v_1', v_1'', v_2'')$.
Also, the matrix $j(A) \sim C$ is conjugate to $c$ via $h$.
Hence $(i^1,j^1)$ identifies the two loops of $\bigoplus_2 \cohomol^1(_{\BianchiGrp} \backslash X; \thinspace \ftwo)$
with those of $\cohomol^1(_{\congruenceGrp} \backslash X; \thinspace \ftwo)$,
and thus is an isomorphism. The ranks of $(i^0,j^0)$ and $(i^2,j^2)$ are obvious.
\end{proof}
As $X$ is a $2$-dimensional retract, the  $E_1^{p,q}$ terms are concentrated in the three columns $p \in \{0, 1, 2\}$.
And for $E_2^{p,q}({\BianchiGrp} , X; \thinspace \ftwo)$ 
(we shall use this notation with arguments to distinguish between the two spectral sequences),
they are concentrated in the two columns $p \in \{0, 1\}$, because \mbox{$\cohomol^2(_{\BianchiGrp} \backslash X; \thinspace \ftwo) = 0$}.
As we have seen in Section~\ref{The cohomology of the congruence subgroup}
that $d_2^{0,q}({\congruenceGrp} , X; \thinspace \ftwo) = 0$, the $p =2$ column has for all $q \geq 0$ stationary terms
$E_2^{2,q}({\congruenceGrp} , X; \thinspace \ftwo) \cong \cohomol^2(_{\congruenceGrp} \backslash X; \thinspace \ftwo) \cong \ftwo$
which remain as the $E_\infty^{2,q}$-term. So the following lemma gives us all the information that we need in order to achieve the computation.
\begin{lemma} \label{kernel}
 In degrees $q > 0$, $p \in \{0, 1\}$, the map $$\bigoplus_2 E_2^{p,q}({\BianchiGrp} , X; \thinspace \ftwo)
 \aTop{(i^*,j^*)}{\longrightarrow}
 E_2^{p,q}({\congruenceGrp} , X; \thinspace \ftwo)$$ 
  is surjective, with kernel \[  
\begin{array}{l | cccl}
q = 4k+4 &  \langle e_4^i +e_4^j \rangle  &  0 \\
q = 4k+3 &  \langle b_3^i+x_3^j, b_3^j+x_3^i \rangle  &  \langle e_2^{\langle A \rangle, i} b_1^{i}+ e_2^{\langle A \rangle, j} b_1^{j} \rangle  \\
q = 4k+2 &  0    &  \langle e_2^{\langle A \rangle, i}+ e_2^{\langle A \rangle, j} \rangle \\
q = 4k+1 &  0    &   \langle b_1^{i}+ b_1^{j} \rangle \\
\hline k \in \mathbb{N} \cup \{0\} &  p = 0 & p = 1,
\end{array}
\]
where the subscripts specify the degrees of the cohomology classes,
and the superscripts can be ignored (they only serve for tracking back the origin of a class in the calculation).
Throughout the column $p = 2$, there is a constant term $\ftwo$  in the corresponding cokernel.
\end{lemma}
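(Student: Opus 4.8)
The plan is to carry out the entire computation at the level of the two $E_1$-pages and the map $(i^*,j^*)$ between them, and then pass to $E_2$ through the few surviving $d_1$-differentials. First I would record the cells carrying non-central $2$-torsion for each group: for $\congruenceGrp$ the orbit space $_\congruenceGrp\backslash X_s$ is the dumbbell-shaped graph, with edge stabilizers $\langle A\rangle,\langle B\rangle,\langle C\rangle\cong\Z/4$ and vertex stabilizers $\langle C,B\rangle,\langle B,A\rangle$; for $\BianchiGrp$ the orbit space $_\BianchiGrp\backslash X_s$ is the tadpole-shaped graph, with edge stabilizers $\langle c\rangle,\langle A\rangle\cong\Z/4$ and vertex stabilizers $\langle c,A\rangle,\langle A,b\rangle$. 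The vertex stabilizers are the finite groups $\Qe$ and $\Te$. In degrees $q>0$ only these cells contribute, so the columns $p=0$ and $p=1$ of $E_1^{p,q}$ are direct sums of the graded pieces $\cohomol^q$ of the stabilizer cohomology rings, which I would write out explicitly: $\cohomol^*(\Z/4;\ftwo)=\Lambda(b_1)\otimes\ftwo[e_2]$, whereas $\cohomol^*(\Qe;\ftwo)$ and $\cohomol^*(\Te;\ftwo)$ are $4$-periodic with a common polynomial generator $e_4$ in degree $4$ (Poincar\'e series $(1+2t+2t^2+t^3)/(1-t^4)$ and $(1+t^3)/(1-t^4)$ respectively). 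This $4$-periodicity is exactly what forces the case division of the statement into residues of $q$ modulo $4$.

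Next I would compute the $d_1$-differentials, which are the simplicial coboundaries of these graphs: on each page they are alternating sums of the restriction maps $\cohomol^*(\Qe;\ftwo)\to\cohomol^*(\Z/4;\ftwo)$ and $\cohomol^*(\Te;\ftwo)\to\cohomol^*(\Z/4;\ftwo)$ read off from the incidences. Passing to homology yields $E_2^{0,q}$ and $E_2^{1,q}$ for both groups; on the $\BianchiGrp$-side the $p=2$ column is already zero because $\cohomol^2(_\BianchiGrp\backslash X;\ftwo)=0$, and on the $\congruenceGrp$-side the $d_2$-differentials vanish (Section~\ref{The cohomology of the congruence subgroup}), so these $E_2$-terms are the ones entering the final Mayer--Vietoris computation. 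I would arrange the resulting graded pieces so that each class ($b_1$, $e_2^{\langle A\rangle}$, the degree-$3$ vertex class $b_3$, and $e_4$) is labelled by the cell it comes from, which is the bookkeeping recorded by the superscripts $i,j$.

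The heart of the argument is identifying the map $(i^*,j^*)$ summand by summand. Since $i$ is the natural inclusion, $i^*$ is the restriction induced by the containment of the $\BianchiGrp$-fundamental domain inside the $\congruenceGrp$-one; since $j$ is conjugation by $\begin{pmatrix}0&1\\\omega&0\end{pmatrix}$, $j^*$ is the corresponding conjugation isomorphism followed by the cell identification. Here I would reuse the conjugacy relations already established in the proof of Proposition~\ref{quotient spaces}: the matrices $c$ and $C=i(C)$ are conjugate via $h=\begin{pmatrix}1&0\\1&1\end{pmatrix}$, and $j(A)=c^{-1}Cc$ is likewise conjugate to $c$ via $h$. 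These identifications match the two Bianchi edge classes $b_1^i,b_1^j$, the periodicity classes $e_2^{\langle A\rangle,i},e_2^{\langle A\rangle,j}$ and the vertex classes $e_4^i,e_4^j$ each with the single corresponding class on the $\congruenceGrp$-side; on every such matched generator both $i^*$ and $j^*$ are isomorphisms onto the same one-dimensional target, so the map out of $\bigoplus_2$ is surjective with kernel the antidiagonal $\langle\alpha^i+\alpha^j\rangle$. This accounts directly for the $p=1$ kernels in degrees $q\equiv1,2,3$ (the edge classes $b_1$, $e_2^{\langle A\rangle}$, $e_2^{\langle A\rangle}b_1$) and for the $p=0$, $q\equiv0$ kernel $\langle e_4^i+e_4^j\rangle$. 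The degree-$3$ vertex classes behave oppositely: both $b_3^i$ and $b_3^j$ lie in the kernel, so the full rank-$2$ space $\langle b_3^i,b_3^j\rangle$ survives, which I would confirm by checking that the corresponding $E_2^{0,q}(\congruenceGrp)$-slot receives nothing from them. Finally the constant $\ftwo$ in the cokernel along $p=2$ is forced by $E_2^{2,q}(\congruenceGrp)\cong\ftwo$ having no counterpart on the $\BianchiGrp$-side. Reading off kernels and cokernels in each residue class then produces the table.

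The main obstacle I anticipate is precisely the determination of $j^*$ on the cohomology of the vertex groups $\Qe$ and $\Te$, and its interaction with the $d_1$-differentials: the conjugation can permute the low-degree generators and must be tracked compatibly with the restriction maps to the edge groups, so that the antidiagonal kernels come out with exactly the stated generators and so that the degree-$3$ vertex classes are seen to vanish into the $\congruenceGrp$-page rather than survive to the target. Keeping these identifications internally consistent across the two spectral sequences — rather than the routine linear algebra of extracting kernels afterwards — is the delicate step of the proof.
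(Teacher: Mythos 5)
Your overall plan coincides with the paper's: compare the two equivariant spectral sequences via the conjugacy identifications, compute the $d_1$-differentials from the restriction maps on the torsion-subcomplex graphs, and read off kernel and cokernel from the resulting $E_2$-tables. However, two concrete steps as you describe them would fail. First, your set-up claim that ``in degrees $q>0$ only these [non-central $2$-torsion] cells contribute'' is false: every cell stabilizer contains the central $\pm 1$, whose mod $2$ cohomology is $\ftwo$ in \emph{every} degree, so all cells contribute to $E_1^{p,q}$ for $q>0$. The paper therefore splits the computation into two parts: Proposition~\ref{quotient spaces} supplies the orbit-space contribution --- the $\ftwo$ in rows $q \equiv 0 \bmod 4$ at $p=0$ and the constant $\ftwo$ cokernel throughout $p=2$ --- and only the complementary, reduced contribution is carried by the non-central $2$-torsion subcomplexes. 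You invoke $E_2^{2,q}(\congruenceGrp)\cong\ftwo$ for the cokernel, but that term comes precisely from the cells your $E_1$-description discards, so your pages as set up are internally inconsistent.

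Second, and more seriously, your mechanism for the $p=1$ kernels --- ``on every matched generator both $i^*$ and $j^*$ are isomorphisms onto the same one-dimensional target, so the kernel is the antidiagonal'' --- is wrong in the rows $q\equiv 2,3 \bmod 4$, where $E_2^{1,q}(\congruenceGrp)$ is \emph{three}-dimensional, with generators supported on the $A$-, $B$- and $C$-edges (your accounting never mentions the $B$-edge at all). There the two copies of the $c$-loop class map to \emph{different} targets, $i^*(e_2^{\langle c \rangle}) = e_2^{\langle C \rangle}$ via $i(C)\sim c$ but $j^*(e_2^{\langle c \rangle}) = e_2^{\langle A \rangle}$ via $j(A)=c^{-1}Cc\sim c$, hence contribute no kernel; the actual kernel generators $e_2^{\langle A \rangle, i}+e_2^{\langle A \rangle, j}$ and $e_2^{\langle A \rangle, i}b_1^i+e_2^{\langle A \rangle, j}b_1^j$ are supported on the Bianchi $A$-edge, both copies of which hit the same $B$-edge class because $i(B)\sim A$ and $j(B)=-c^{-1}Bc\sim A$. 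Making this come out right requires the step your plan lacks: the paper's observation that the assignments $i(A)=A$ and $j(C)=A$ can be ignored because the Bianchi $A$-edge supports the zero class in the relevant slots of $E_2^{1,q}(\BianchiGrp)$ --- exactly the $d_1$-interaction you flag as delicate but do not resolve (for instance $b_1^{\langle A \rangle}$ dies at $E_2$ since $\cohomol^1(\Te;\ftwo)=0$ forces the restriction from the $\Qe$-vertex to hit it, whereas $e_2^{\langle A \rangle}$ survives since all of $\cohomol^2(\Qe;\ftwo)$ consists of products of degree-one classes and restricts to zero in $\cohomol^2(\Z/4;\ftwo)$). Your $p=0$ analysis --- the antidiagonal $\langle e_4^i+e_4^j\rangle$ and both $b_3$'s lying in the kernel because $b_3\mapsto 0$ under restriction from $\Te$ to the $\Qe$-vertices --- is correct and agrees with the paper.
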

\begin{proof}
From Proposition~\ref{quotient spaces}, 
we already know the contribution of the orbit spaces: a term of type $\ftwo$ 
in rows $q \equiv 0 \mod 4$ in the column $p = 0$,
and a constant term $\ftwo$ throughout the column $p = 2$ in the corresponding cokernel.
Complementary to this, there is a contribution of the non-central 2-torsion subcomplexes $X_s(\Gamma)$,
which we are now going to determine, implicitly making use of information gathered in the proof of Proposition~\ref{quotient spaces},
like the action of the matrix $h$.
The maps $i$ and $j$ from the stabilizers on $X_s(\congruenceGrp)$ to the stabilizers on $X_s(\BianchiGrp)$,
$$\decorateddumbbellgraph \quad \aTop{\longrightarrow}{\aTop{i,j}{\longrightarrow}} \quad \decoratedrhograph$$
are determined by the following conjugacies 
in $\BianchiGrp$:
 \begin{center}
 $j(A) = \begin{pmatrix} -1 & 1 \\ -2 & 1 \end{pmatrix} = P^{-1} \cdot C \cdot P = P^{-1} \cdot i(C) \cdot P$,  
 where $P = \begin{pmatrix} 1+\omega & -1 \\ -2 & 1 -\omega \end{pmatrix} $;
 \\
 \qquad $j(B) = \begin{pmatrix} 1+\omega & -\omega \\ 2 & -1-\omega \end{pmatrix} 
 = Q^{-1} \cdot B \cdot Q = Q^{-1} \cdot i(B) \cdot Q,$ 
 \\
 where
 $Q = \begin{pmatrix} 1 & -1-\omega \\ 0 & 1 \end{pmatrix} $.
 \end{center}
In order to untangle the computation below, about which maps are induced by $i$ and $j$ on the mod $2$ cohomology rings of the finite stabilizers,
we single out already now those which induce zero maps on the $E_2$-level.
Namely, the assignments 
\begin{center}
$j(C) = \begin{pmatrix} 1 & -\omega \\ -\omega & -1 \end{pmatrix} = R^{-1} \cdot A \cdot R$, 
where $R = \begin{pmatrix} 0 & -1 \\ 1 & -\omega \end{pmatrix}$,
respectively $i(A) = A$, 
\end{center}
 induce on the 
$E_2^{1,q}(\congruenceGrp, X, \ftwo)$ terms, 
which have their generators suppported on the loops obtained from the edges stabilized by $C$, respectively $A$,
zero maps coming from the edge stabilized by $A$ in $X_s(\BianchiGrp)$, because the latter supports the zero class in 
$E_2^{1,q}(\BianchiGrp, X, \ftwo)$.
Therefore, we can ignore those two assignments, and work only with the remaining ones.

Having singled out where maps pass to zero on the $E_2$ level, we simply mark ``zero map induced'' at those places in the following diagrams.
For the map $i$, at the passage from the $2$-torsion subcomplex of $\congruenceGrp$ to the 
$2$-torsion subcomplex of $\BianchiGrp$ when the second loop is being folded down onto the middle edge,
we have remaining

\begin{tikzpicture}[align=right] 
\path
   (0,2) node (C) {$\langle C \rangle \cong \Z/4$}
   (0.2,1) node (s1) {$\cong$}
   (2.5,2) node (CB)  {$\langle C, B \rangle \cong \Qe$}
   (2.7,1) node (s2) {$\cong$}
   (5,2) node (B)  {$\langle B \rangle \cong \Z/4$}
   (5.2,1) node (s3) {$\cong$}   
   (7.5,2) node (BA)  {$\langle B, A \rangle \cong \Qe$}
   (8.7,1) node (s4) {\footnotesize$\aTop{A \mapsto A,}{B \mapsto c^{-1}\cdot B\cdot c}$\normalsize}   
   (11,2) node (A)  {$\langle A \rangle \cong \Z/4$}
      (11.2,1) node (zero) {$\aTop{\text{zero map}}{\text{induced}}$}  
   (0,0) node (c) {$\langle c \rangle \cong \Z/4$}
   (2.5,0) node (cA) {$\langle A, c \rangle \cong \Qe$}
   (5,0) node (a)  {$\langle A \rangle \cong \Z/4$}
   (7.5,0) node (Ab) {$\langle A, b \rangle \cong \Te$};
  \draw[->] (C) to node {} (c);
  \draw[->] (CB) to node {} (cA);
    \draw[->] (B) to node {} (a);
    \draw[->] (BA) to node {} (Ab);  
\end{tikzpicture}

On mod $2$ cohomology rings, this induces

\scalebox{0.85}{
\begin{tikzpicture}[align=right] 
\path
   (0,2) node (C) {$\ftwo [e_2](b_1)$}
   (0.2,1) node (s1) {$\cong$}   
   (3,2) node (CB)  {$\ftwo [e_4](x_1,y_1,x_2,y_2,x_3)$}
   (3.2,1) node (s2) {$\cong$}   
   (6,2) node (B)  {$\ftwo [e_2](b_1)$}
   (6.2,1) node (s3) {$\cong$}   
   (9,2) node (BA)  {$\ftwo [e_4](x_1,y_1,x_2,y_2,x_3)$}
   (9.9,1) node (s4) {$\aTop{b_3 \mapsto x_3,}{e_4 \mapsto e_4}$} 
   (12,2) node (A)  {$\ftwo [e_2](b_1)$}
   (12.2,1) node (zero) {$\aTop{\text{zero map}}{\text{induced}}$}  
   (0,0) node (c) {$\ftwo [e_2](b_1)$}
   (3,0) node (cA) {$\ftwo [e_4](x_1,y_1,x_2,y_2,x_3)$}
   (6,0) node (a)  {$\ftwo [e_2](b_1)$}
   (9,0) node (Ab) {$\ftwo [e_4](b_3)$};
  \draw[<-] (C) to node {} (c);
  \draw[<-] (CB) to node {} (cA);
    \draw[<-] (B) to node {} (a);
    \draw[<-] (BA) to node {} (Ab);  
\end{tikzpicture}
}

For the map $j$, we have remaining

\begin{tikzpicture}[align=right] 
\path
   (0,2) node (C) {$\langle C \rangle \cong \Z/4$}
    (0.2,1.25) node (zero) {$\aTop{\text{zero map}}{\text{induced}}$} 
   (1.5,0.5) node (s1) {$\cong$}
   (2.5,2) node (CB)  {$\langle C, B \rangle \cong \Qe$}
   (4.2,0.5) node (s2) {$\cong$}
   (5,2) node (B)  {$\langle B \rangle \cong \Z/4$}
   (4.8,0.5) node (s3) {$\cong$}   
   (7.5,2) node (BA)  {$\langle B, A \rangle \cong \Qe$}
   (7.9,0.65) node (s4) {$\aTop{C \mapsto A,}{B \mapsto b^{-1}\cdot A\cdot b}$}   
   (10,2) node (A)  {$\langle A \rangle \cong \Z/4$}
   (0,0) node (c) {$\langle c \rangle \cong \Z/4$}
   (2.5,0) node (cA) {$\langle A, c \rangle \cong \Qe$}
   (5,0) node (a)  {$\langle A \rangle \cong \Z/4$}
   (7.5,0) node (Ab) {$\langle A, b \rangle \cong \Te$};
  \draw[->] (A) to node {} (c);
  \draw[->] (BA) to node {} (cA);
    \draw[->] (B) to node {} (a);
    \draw[->] (CB.220) to node {} (Ab.160);  
\end{tikzpicture}

On mod $2$ cohomology rings, this induces

\scalebox{0.8}{
\begin{tikzpicture}[align=right] 
\path
   (0,2) node (C) {$\ftwo [e_2](b_1)$}
       (0.2,1.25) node (zero) {$\aTop{\text{zero map}}{\text{induced}}$} 
      (1.5,0.5) node (s1) {$\cong$}
   (3,2) node (CB)  {$\ftwo [e_4](x_1,y_1,x_2,y_2,x_3)$}
         (5.05,0.5) node (s1) {$\cong$}
   (6,2) node (B)  {$\ftwo [e_2](b_1)$}
         (5.8,0.5) node (s1) {$\cong$}
   (9,2) node (BA)  {$\ftwo [e_4](x_1,y_1,x_2,y_2,x_3)$}
   (12,2) node (A)  {$\ftwo [e_2](b_1)$}
   (0,0) node (c) {$\ftwo [e_2](b_1)$}
   (3,0) node (cA) {$\ftwo [e_4](x_1,y_1,x_2,y_2,x_3)$}
   (6,0) node (a)  {$\ftwo [e_2](b_1)$}
      (8.7,0.8) node (s1) {$\aTop{b_3 \mapsto x_3,}{e_4 \mapsto e_4}$}
   (9,0) node (Ab) {$\ftwo [e_4](b_3)$};
  \draw[<-] (A) to node {} (c);
  \draw[<-] (BA) to node {} (cA);
    \draw[<-] (B) to node {} (a);
    \draw[<-] (CB.220) to node {} (Ab.160);  
\end{tikzpicture}
}

Assembling the maps $i$ and $j$ to $(i,j)$, we can now see that $(i^*,j^*)$ is surjective on the $E_1^{p,q}$ terms 
in the two columns $p \in \{0, 1\}$.
To compute the kernel of  $(i^*,j^*)$, we compare the $E_2$ pages: 

The  $E_2$ page for the action of $\BianchiGrp$ on $X$ is concentrated in the two columns $p \in \{0,1\}$, with the following generators,
where the superscripts specify the stabilizer of the supporting cell.
\[  
\begin{array}{l | cccl}
q = 4k+4 &  \langle e_4^{\langle A, c \rangle} \rangle  &  \langle (e_2^{\langle c \rangle})^2 \rangle \\
q = 4k+3 &  \langle  x_3^{\langle A, c \rangle}, b_3^{\langle A, b \rangle} \rangle  &  \langle e_2^{\langle c \rangle} b_1^{\langle c \rangle},  e_2^{\langle A \rangle} b_1^{\langle A \rangle} \rangle  \\
q = 4k+2 &   \langle  x_2^{\langle A, c \rangle}, y_2^{\langle A, c \rangle} \rangle & \langle e_2^{\langle c \rangle}, e_2^{\langle A \rangle} \rangle \\
q = 4k+1 &   \langle  x_1^{\langle A, c \rangle} \rangle &  \langle b_1^{\langle c \rangle} \rangle \\
\hline k \in \mathbb{N} \cup \{0\} &  p = 0 & p = 1
\end{array}
\]
The  $E_2$ page for the action of $\congruenceGrp$ on $X$ is concentrated in the three columns $p \in \{0,1,2\}$, with the following generators.
\small
\[  
\begin{array}{l | cccl}
q = 4k+4 &  \langle e_4^{\langle C,B \rangle}+ e_4^{\langle B,A \rangle} \rangle  &  \langle (e_2^{\langle C \rangle})^2 , (e_2^{\langle A \rangle})^2 \rangle & \ftwo \\
q = 4k+3 &  \langle  x_3^{\langle C, B \rangle}, x_3^{\langle B, A \rangle} \rangle  &  \langle e_2^{\langle C \rangle} b_1^{\langle C \rangle}, e_2^{\langle B \rangle} b_1^{\langle B \rangle}, e_2^{\langle A \rangle} b_1^{\langle A \rangle} \rangle  & \ftwo \\
q = 4k+2 &   \langle  x_2^{\langle C,B \rangle}, y_2^{\langle C, B \rangle}, x_2^{\langle B,A \rangle}, y_2^{\langle B, A \rangle}  \rangle & \langle e_2^{\langle C \rangle}, e_2^{\langle B \rangle}, e_2^{\langle A \rangle} \rangle & \ftwo \\
q = 4k+1 &   \langle  x_1^{\langle C, B \rangle}, x_1^{\langle B,A \rangle} \rangle &  \langle b_1^{\langle C \rangle} +b_1^{\langle A \rangle} \rangle & \ftwo \\
\hline k \in \mathbb{N} \cup \{0\} &  p = 0 & p = 1 & p = 2
\end{array}
\]\normalsize
This yields the claimed kernel (adding $i$, respectively $j$ to the superscripts to specify the relevant copy of $E^{p,q}({\BianchiGrp} , X; \thinspace \ftwo)$ for the pre-image),
and also yields the claimed cokernel.
\end{proof}

\section{Investigating the module structure of the cohomology ring}
\label{the proof}
With the above preparation, we will in this section conclude the proof of Theorem~\ref{cohomology ring}. 
Combining Proposition~\ref{quotient spaces} and Lemma~\ref{kernel},
we can see that the injections $i$ and $j$ induce a map on cohomology of groups,
 $$\bigoplus_2 \cohomol^*({\BianchiGrp}; \thinspace \ftwo)
 \aTop{(i^*,j^*)}{\longrightarrow}
 \cohomol^*({\congruenceGrp}; \thinspace \ftwo)$$ 
 which has the following kernel and cokernel dimensions over $\ftwo$.
\[  
\begin{array}{l | cccl}
q = 4k+5 &  0  &  1 \\
q = 4k+4 &  2  &  1 \\
q = 4k+3 &  3  &  1 \\
q = 4k+2 &  1  &  1 \\
q = 1    &  0  &  0  \\
\hline k \in \mathbb{N} \cup \{0\} &  \dim_{\ftwo} \ker(i^*,j^*) & \dim_{\ftwo} \operatorname{coker}(i^*,j^*)
\end{array}
\]
In the Mayer--Vietoris long exact sequence on group cohomology with $\ftwo$-coefficients derived from the amalgamated decomposition
$$\ \arithGrp \cong \BianchiGrp *_{\congruenceGrp} \BianchiGrp $$
with respect to the maps $i$ and $j$, 

\footnotesize
\begin{tikzpicture}[descr/.style={fill=white,inner sep=1.5pt}]
        \matrix (m) [
            matrix of math nodes,
            row sep=1em,
            column sep=3.3em,
            text height=1.5ex, text depth=0.25ex
        ]
        {   &  &\hdots & \cohomol^{n+1}({\arithGrp}) \\
	    & \cohomol^n({\congruenceGrp}) & \bigoplus\limits_2 \cohomol^n({\BianchiGrp}) & \cohomol^n({\arithGrp}) \\
            & \hdots &  &  \\
        };

        \path[overlay,<-, font=\tiny,>=latex]
        (m-1-3) edge (m-1-4) 
        (m-1-4) edge[out=355,in=175]  (m-2-2)
        (m-2-2) edge node[descr,yshift=-1.6ex] {$(i^n,j^n)$} (m-2-3)
        (m-2-3) edge (m-2-4)
        (m-2-4) edge[out=355,in=175]  (m-3-2);
\end{tikzpicture}
\normalsize

the above calculated dimensions yield

\begin{tikzpicture}[descr/.style={fill=white,inner sep=1.5pt}]
        \matrix (m) [
            matrix of math nodes,
            row sep=1em,
            column sep=3.3em,
            text height=1.5ex, text depth=0.25ex
        ]
        {   & 1 & 0 & \dim_{\ftwo}\cohomol^5({\arithGrp}) \\
            & 1 & 2 & \dim_{\ftwo}\cohomol^4({\arithGrp}) \\
	    & 1 & 3 & \dim_{\ftwo}\cohomol^3({\arithGrp}) \\
            & 1 & 1 & \dim_{\ftwo}\cohomol^2({\arithGrp}) \\
            & 0 & 0 & \dim_{\ftwo}\cohomol^1({\arithGrp}) \\
        };

        \path[overlay,<-, font=\tiny,>=latex]
        (m-1-3) edge (m-1-4) 
        (m-1-4) edge[out=355,in=175]  (m-2-2)
        (m-2-3) edge (m-2-4)
        (m-2-4) edge[out=355,in=175]  (m-3-2)
        (m-3-3) edge (m-3-4)
        (m-3-4) edge[out=355,in=175]  (m-4-2)
        (m-4-3) edge (m-4-4)
        (m-4-4) edge[out=355,in=175]  (m-5-2)
        (m-5-3) edge (m-5-4);        
\end{tikzpicture}
\normalsize

We identify $e_4 := e_4^i +e_4^j$ as the class, multiplication by which yields the $4$-periodicity exposed in Lemma~\ref{kernel}. 
Let us set the following names for the remaining classes:
 $x_4 :=  e_2^{\langle A \rangle, i} b_1^{i}+ e_2^{\langle A \rangle, j} b_1^{j}$, \medspace
 $x_3 := b_3^i+x_3^j$ , \medspace
 $y_3 := b_3^j+x_3^i$, \medspace
 \mbox{$z_3 := e_2^{\langle A \rangle, i}+ e_2^{\langle A \rangle, j}$,} \medspace
$x_2 := b_1^{i}+ b_1^{j}$
and $s_{q+2}$ for the image of the generator of $\cohomol^2(_{\congruenceGrp} \backslash X; \thinspace \ftwo)$
that is generating the $E_2^{2,q}$ term of the equivariant spectral sequence for $\congruenceGrp$
in rows $q \in \{1, 2, 3, 4\}$.
\\
Theorem \ref{general theorem} tells us that $\cohomol^*(\arithGrp; \thinspace \ftwo)$ is a free module over  $\ftwo [e_4]$,
hence the above remaining classes, together with $1 \in \cohomol^0(\arithGrp ; \thinspace \F_2)$ constitute a basis $\{1, x_2,x_3,y_3,z_3,s_3,x_4,s_4,s_5,s_6 \}$ for it.
Thus we get the result stated in Theorem~\ref{cohomology ring}.
\\
We can use our result to calculate dimension bounds for 
$\cohomol^q({\rm GL}_2\left({\mathbb{Z}}[\sqrt{-2}\thinspace]\left[\frac{1}{2}\right]\right); \thinspace \ftwo)$,
see~\cite{GL2calculation}.

\begin{bibdiv}
 \begin{biblist}
 \bib{AdemSmith}{article}{
    author={Adem, Alejandro},
    author={Smith, Jeff H.},
    title={Periodic complexes and group actions},
    journal={Ann. of Math. (2)},
    volume={154},
    date={2001},
    number={2},
    pages={407--435},
    issn={0003-486X},
    review={\MR{1865976 (2002i:57031)}},
    doi={10.2307/3062102},
}
 
\bib{BLR}{article}{
   author = {{Berkove}, Ethan J.}, 
      author = {{Lakeland}, Grant S.},
   author = {{Rahm}, Alexander D.},
    title = {The mod 2 cohomology rings of congruence subgroups in the {B}ianchi groups},
  journal = {ArXiv: 1707.06078},
      year = {2018},
}

\bib{BerkoveRahm}{article}{
   author={Berkove, Ethan},
   author={Rahm, Alexander D.},
   title={The mod 2 cohomology rings of ${\rm SL}_2$ of the imaginary
   quadratic integers},
   note={With an appendix by Aurel Page},
   journal={J. Pure Appl. Algebra},
   volume={220},
   date={2016},
   number={3},
   pages={944--975},
   issn={0022-4049},
   review={\MR{3414403}},
}

\bib{Bianchi}{article}{
   author={Bianchi, Luigi},
   title={Sui gruppi di sostituzioni lineari con coefficienti appartenenti a
   corpi quadratici immaginar\^{\i }},
   language={Italian},
   journal={Math. Ann.},
   volume={40},
   date={1892},
   number={3},
   pages={332--412},
   issn={0025-5831},
   review={\MR{1510727}},
   doi={10.1007/BF01443558},
}
\bib{BorelSerre}{article}{
    Author = {Borel, Armand},
    Author = {Serre, Jean-Pierre},
    Title = {{Cohomologie d'immeubles et de groupes S-arithm\'etiques}},
    Journal = {{Topology}},
    ISSN = {0040-9383},
    Volume = {15},
    Pages = {211--232},
    Year = {1976},
    Publisher = {Elsevier Science Ltd (Pergamon), Oxford},
    Language = {French},
    DOI = {10.1016/0040-9383(76)90037-9},
       review={\medspace Zbl{0338.20055}}
}
\bib{BrotoHenn}{article}{
   author={Broto, Carlos},
   author={Henn, Hans-Werner},
   title={Some remarks on central elementary abelian $p$-subgroups and
   cohomology of classifying spaces},
   journal={Quart. J. Math. Oxford Ser. (2)},
   volume={44},
   date={1993},
   number={174},
   pages={155--163},
   issn={0033-5606},
   review={\MR{1222371}},
   doi={10.1093/qmath/44.2.155},
}
\bib{Brown}{book}{
   author={Brown, Kenneth S.},
   title={Cohomology of groups},
   series={Graduate Texts in Mathematics},
   volume={87},
   note={Corrected reprint of the 1982 original},
   publisher={Springer-Verlag, New York},
   date={1994},
   pages={x+306},
   isbn={0-387-90688-6},
   review={\MR{1324339}},
}
\bib{GL2calculation}{thesis}{
author = {Bui Anh Tuan},
author = {Rahm, Alexander D.},
title = {Bounds for the mod 2 cohomology of ${\rm GL}_2\left({\mathbb{Z}}[\sqrt{-2}\thinspace]\left[\frac{1}{2}\right]\right)$
},
school = {Supplement to the present paper, not intended for journal publication, \\ \url{http://hdl.handle.net/10993/40155}},
} 
\bib{CP}{article}{
 author={Connolly, Francis X.},
   author={Prassidis, Stratos},
   title={Groups which act freely on ${\bf R}^m\times S^{n-1}$},
   journal={Topology},
   volume={28},
   date={1989},
   number={2},
   pages={133--148},
   issn={0040-9383},
   review={\MR{1003578 (90h:57052)}},
   doi={10.1016/0040-9383(89)90016-5},
}

\bib{Duflot}{article}{
   author={Duflot, J.},
   title={Depth and equivariant cohomology},
   journal={Comment. Math. Helv.},
   volume={56},
   date={1981},
   number={4},
   pages={627--637},
   issn={0010-2571},
   review={\MR{656216}},
   doi={10.1007/BF02566231},
}

\bib{Dwyer--Wilkerson}{article}{
   author={Dwyer, William G.},
   author={Wilkerson, Clarence W.},
   title={Spaces of null homotopic maps},
   note={International Conference on Homotopy Theory (Marseille-Luminy,
   1988)},
   journal={Ast\'{e}risque},
   number={191},
   date={1990},
   pages={6, 97--108},
   issn={0303-1179},
   review={\MR{1098969}},
}
	
\bib{Floege}{article}{
   author={Fl\"oge, Dieter},
   title={Zur Struktur der ${\rm PSL}_{2}$\ \"uber einigen
   imagin\"ar-quadratischen Zahlringen},
   language={German},
   journal={Math. Z.},
   volume={183},
   date={1983},
   number={2},
   pages={255--279},
   issn={0025-5874},
   review={\MR{704107}},
   doi={10.1007/BF01214824},
}
\bib{Henn}{article}{
   author={Henn, Hans-Werner},
   title={The cohomology of ${\rm SL}(3,{\bf Z}[1/2])$},
   journal={$K$-Theory},
   volume={16},
   date={1999},
   number={4},
   pages={299--359},
   issn={0920-3036},
   review={\MR{1683179}},
   doi={10.1023/A:1007746530913},
}			

\bib{HL}{article}{
   author={Henn, Hans-Werner},
   author={Lannes, Jean},
   title={Exotic classes in the mod $2$ cohomology of GL$_n(\mathbb{Z}[1/2])$}, 
   journal={L'Enseignement Math.},
   volume={54},
   year={2008},
   pages={special issue ``Guido's book of conjectures'', 107--108},  
} 
\bib{HLS}{article}{
   author={Henn, Hans-Werner},
   author={Lannes, Jean},
   author={Schwartz, Lionel},
   title={Localizations of unstable $A$-modules and equivariant mod $p$ cohomology},
   journal={Math. Ann.},
   volume={301},
   date={1995},
   number={1},
   pages={23--68},
   issn={0025-5831},
   review={\MR{1312569}},
   doi={10.1007/BF01446619},
}
 
 \bib{Knudson}{book}{
   author={Knudson, Kevin P.},
   title={Homology of linear groups},
   series={Progress in Mathematics},
   volume={193},
   publisher={Birkh\"auser Verlag, Basel},
   date={2001},
   pages={xii+192},
   isbn={3-7643-6415-7},
   review={\MR{1807154}},
   doi={10.1007/978-3-0348-8338-2},
}
\bib{Macbeath}{article}{
   author={Macbeath, A.M.},
   title={Groups of homeomorphisms of a simply connected space},
   journal={Ann. of Math.},
   volume={79},
   date={1964},
   pages={473--487},
}
\bib{Matsumura}{book}{
    Author = {Matsumura, Hideyuki},
    Title = {{Commutative algebra.}},
    Publisher = {{Mathematics Lecture Note Series. New York: W. A. Benjamin, Inc. xii, 262 p. (1970). \medspace}},
   review={Zbl0211.06501}
}

\bib{Mendoza}{book}{
   author={Mendoza, Eduardo R.},
   title={Cohomology of ${\rm PGL}_{2}$\ over imaginary quadratic
   integers},
   series={Bonner Mathematische Schriften [Bonn Mathematical Publications],
   128},
   note={Dissertation, Rheinische Friedrich-Wilhelms-Universit\"at, Bonn,
   1979},
   publisher={Universit\"at Bonn, Mathematisches Institut, Bonn},
   date={1979},
   pages={vi+83},
   review={\MR{611515}},
}
	
\bib{Quillen}{article}{
   author={Quillen, Daniel},
   title={The spectrum of an equivariant cohomology ring. I, II},
   journal={Ann. of Math. (2)},
   volume={94},
   date={1971},
   pages={549--572; ibid. (2) 94 (1971), 573--602},
   issn={0003-486X},
   review={\MR{0298694}},
   doi={10.2307/1970770},
}
	
\bib{trees}{book}{
   author={Serre, Jean-Pierre},
   title={Trees},
   series={Springer Monographs in Mathematics},
   note={Translated from the French original by John Stillwell;
   Corrected 2nd printing of the 1980 English translation},
   publisher={Springer-Verlag, Berlin},
   date={2003},
   pages={x+142},
   isbn={3-540-44237-5},
   review={\MR{1954121}},
}
\bib{Weiss}{thesis}{
author = {Weiss, Nicolas Luc},
title = {Cohomology of GL$_2(\Z[i,1/2])$ with coefficients in ${\mathbb{F}}_2$},
school = {Th\`ese de doctorat, Universit\'e Louis Pasteur, Strasbourg},
year = {2007},
} 

\end{biblist}
\end{bibdiv}

\end{document}